\def\cs{\mathop{\#}}
\def\Z{{\mathbb Z}}
\def\s{\mathfrak s}
\def\titlerunning#1{\gdef\titrun{#1}}
\def\author#1{\gdef\autrun{\def\and{\unskip, }#1}\gdef\@author{#1}}
\def\address#1{{\def\and{\\\hspace*{18pt}}\renewcommand{\thefootnote}{}%
\footnote {#1}}%
\markboth{\autrun}{\titrun}}
\def\email#1{e-mail: #1}
\def\subjclass#1{{\renewcommand{\thefootnote}{}%
\footnote{\emph{Mathematics Subject Classification (2010):} #1}}}
\def\keywords#1{\par\medskip
\noindent\textbf{Keywords.} #1}
\newtheorem{thm}{Theorem}[section]
\newtheorem{cor}[thm]{Corollary}
\newtheorem{question}[thm]{Question}
\theoremstyle{definition}
\newtheorem{rem}[thm]{Remark}
\newtheorem{exa}[thm]{Example}
\newtheorem*{conjecture}{Conjecture}
\newtheorem*{xrem}{Remark}
\numberwithin{equation}{section}
\begin{document}


\baselineskip=17pt


\titlerunning{Periodic Knots and Correction Terms}

\title{Periodic Knots and Heegaard Floer Correction Terms}

\author{Stanislav Jabuka
\and Swatee Naik}

\date{\today}

\maketitle

\address{Jabuka: Department of Mathematics and Statistics, University of Nevada, Reno, NV 89557; \email{jabuka@unr.edu}
\and
Naik: Department of Mathematics and Statistics, University of Nevada, Reno, NV 89557; \email{naik@unr.edu}}

\subjclass{Primary 57M25; Secondary 57N70}


\begin{abstract}
We derive new obstructions to periodicity of classical knots by employing the Heegaard Floer correction terms of the finite cyclic branched covers of the knots. Applying our results to two fold covers, we demonstrate through numerous examples that our obstructions are successful where many existing periodicity obstructions fail.

A combination of previously known periodicity obstructions and the results presented here, leads to a nearly complete (with the exception of a single knot) classification of alternating, periodic, 12-crossing knots with odd prime periods. For the case of alternating knots with 13, 14 and 15 crossings, we give a complete listing of all periodic knots with odd prime periods $q>3$. 
\keywords{Knot, periodic, Heegaard Floer, correction terms.}
\end{abstract}
\section{Introduction}
\subsection{Background}
The study of periodic knots is an extension of the usual framework of knot theory to the equivariant case.  Focusing on finite cyclic group actions, we say that a knot $K$ in $S^3$ is {\em periodic} if there exists an integer $q > 1$ and an orientation preserving diffeomorphism $f:S^3 \to S^3$ such that $f(K) = K$, the order of $f$ is $q$, and the fixed point set of $f$ is a circle disjoint from $K$. Any  such $q$ is called a {\em period} of $K$. The set $B= {\rm Fix}(f)$ is called the {\em axis of $f$}. We shall refer to a knot of period $q$ as {\em $q$-periodic}. The positive resolution of the Smith Conjecture \cite{MB} ensures that the axis $B$ is an unknot. 

\begin{xrem}
It is crucial to assume that $f:S^3\to S^3$ be smooth to ensure that $B$ is an unknot. Montgomery and Zippin \cite{MontgomeryZippin} constructed a homeomorphism of $S^3$ of order two, with a complicated  fixed point set.  
\end{xrem}

If $\langle f\rangle$ is the subgroup of $Diff^+(S^3)$ generated by $f$, then the orbit space $S^3/\langle f\rangle$ is diffeomorphic to $S^3$ \cite{Moise}, and we let $\pi:S^3\to S^3/\langle f\rangle$ be the associated quotient map. The knot $\overline K = \pi(K)$ is called the {\em quotient knot of $K$} and $\overline B = \pi(B)$ the {\em quotient axis} of $K$. See Figure \ref{trefoil} for an illustration of $3$-periodicity of the trefoil.  
\begin{figure}[htb!] 
\centering
\includegraphics[width=10cm]{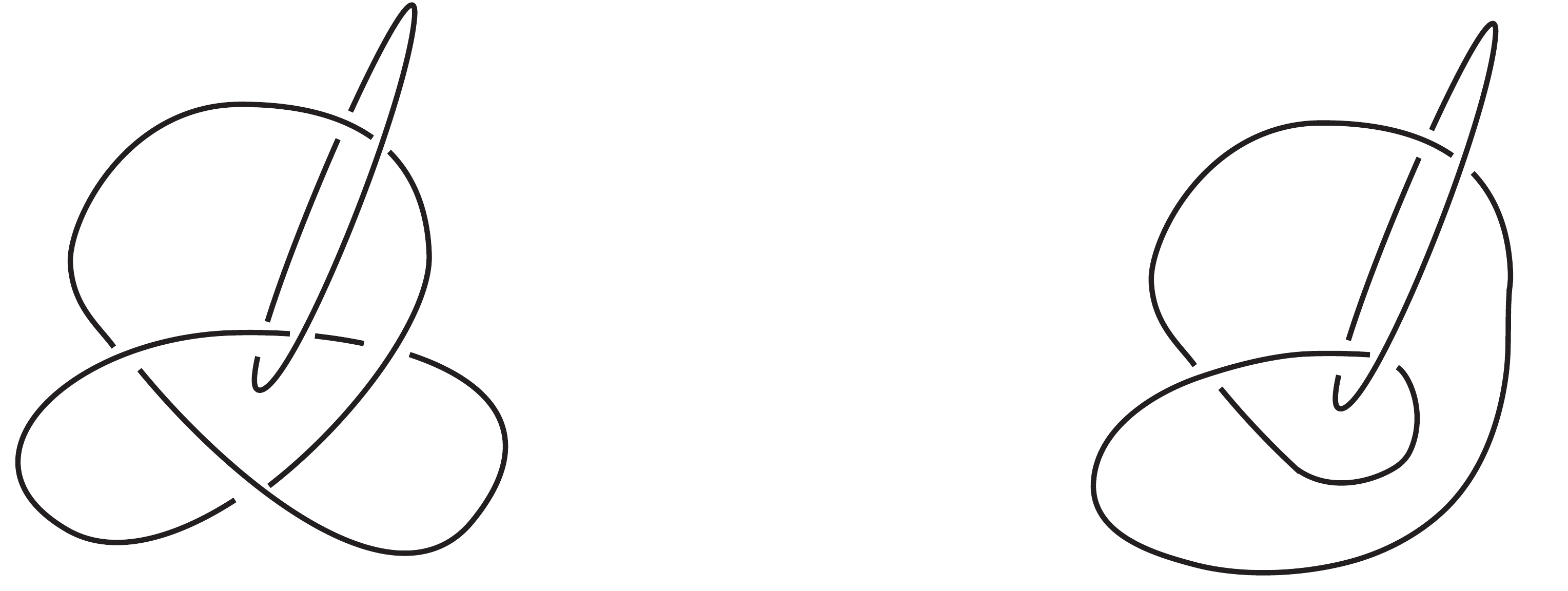}
\put(-190,20){$K$}
\put(-205,100){$B$}
\put(-8,20){$\overline K$}
\put(-10,100){$\overline B$}
\caption{A $3$-periodic diagram of the trefoil $K$ and its quotient knot $\overline K$. The axis $B$ and the quotient axis $\overline B$ are indicated.}\label{trefoil}
\end{figure}
\begin{exa} \label{PeriodsOfTorusKnots}
The periods of the $(a,b)$-torus knot are the divisors of $|a|$ and $|b|$, \cite{Connor}.
\end{exa}

Topologists have studied how periodicity of a knot is reflected in its various invariants, including polynomial invariants such as the Alexander polynomial~\cite{D--L,M1,M2}, the Jones polynomial~\cite{M3} and its 2-variable generalizations~\cite{Nafaa1,Nafaa2,Prz,T2,T3,Y1,Y2}, and the twisted Alexander polynomials~\cite{HLN}.  Obstructions to periodicity have also been found in terms of hyperbolic structures on knot complements~\cite{AHW}, homology groups of branched cyclic covers~\cite{N1,N3},  concordance invariants of Casson and Gordon~\cite{N3}, Khovanov homology~\cite{Nafaa3} and link Floer homology~\cite{LinkFloer}.  

We add to these a new knot periodicity obstruction that is based on the Heegaard Floer correction terms of a cyclic branched cover of the knot. To do this, we let $Y$ be a prime-power fold cyclic cover of $S^3$ branched along $K$. Any such $Y$ is a rational homology $3$-sphere, and when $K$ is $q$-periodic, features an order $q$, orientation preserving, self-diffeomorphism $F:Y\to Y$ \cite{N1}. 
The latter is used in conjunction with the diffeomorphism invariance of the Heegaard Floer correction terms to obstruct periodicity. 
\begin{thm} \label{HeegaardFloerTheoremOnPullbackOfCorrectionTerms}
Let $Y$ be a rational homology $3$-sphere, $\s \in Spin^c(Y)$ a spin$^c$-structure on $Y$,  and $F:Y\to Y$ an orientation preserving diffeomorphism. Then 
$$d(Y,F^*(\s)) = d(Y,\s),$$
where $F^*(\s)$ is the pullback of $\s$ under $F$, and where $d(Y,\s)$ is the Heegaard Floer correction term of  $Y$ associated to the spin$^c$-structure $\s$. 
\end{thm}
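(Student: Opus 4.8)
The plan is to derive the statement from two ingredients: the purely algebraic description of the correction term, and the diffeomorphism invariance of Heegaard Floer homology together with its absolute $\Q$-grading. Recall that $d(Y,\s)$ is, by definition, the minimal $\Q$-grading of a nonzero homogeneous element in the image of the canonical map $\pi\colon HF^\infty(Y,\s)\to HF^+(Y,\s)$; equivalently, it is the bottom grading of the $U$-tower $\imm\pi\subseteq HF^+(Y,\s)$. So it suffices to produce, for an orientation-preserving diffeomorphism $F\colon Y\to Y$, an isomorphism $HF^+(Y,F^*(\s))\to HF^+(Y,\s)$ that preserves the absolute grading, commutes with the $U$-action, and carries $\imm\pi$ onto $\imm\pi$.

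First I would build this isomorphism at the chain level. Choose a pointed Heegaard diagram $(\Sigma,\boldsymbol\alpha,\boldsymbol\beta,z)$ for $(Y,F^*(\s))$ together with a generic family of almost complex structures. Pushing all of this data forward by $F$ yields a pointed Heegaard diagram $(F(\Sigma),F(\boldsymbol\alpha),F(\boldsymbol\beta),F(z))$ for $(Y,F_*(F^*(\s)))=(Y,\s)$ --- here I use that $F_*=(F^*)^{-1}$ on $Spin^c(Y)$ --- along with a corresponding family of almost complex structures. Since $F$ is a diffeomorphism it induces a bijection of intersection points $\boldsymbol\alpha\cap\boldsymbol\beta$, a homeomorphism of the relevant moduli spaces of pseudo-holomorphic disks, and it preserves domains and Maslov indices; hence it determines an isomorphism of the associated $\Z[U]$-chain complexes compatible with the exact sequences relating the various flavors. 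Invariance of Heegaard Floer homology under the auxiliary choices (Ozsv\'ath--Szab\'o) then upgrades this to an isomorphism $HF^+(Y,F^*(\s))\xrightarrow{\ \cong\ }HF^+(Y,\s)$ which is $U$-equivariant and sits in a commuting square with the analogous isomorphism on $HF^\infty$. Finally I would check that the absolute $\Q$-grading is respected: it is characterized by its value on $S^3$ and its compatibility with the cobordism maps, both of which a diffeomorphism transports verbatim (equivalently, the mapping cylinder of $F$ is the product cobordism with its two ends reglued by $F$, and it induces exactly the map just constructed).

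Granting all of this, the conclusion is formal. The isomorphism $\Phi\colon HF^+(Y,F^*(\s))\to HF^+(Y,\s)$ is grading preserving, $U$-equivariant, and commutes with the maps from $HF^\infty$, so it restricts to a grading-preserving isomorphism $\imm\pi\to\imm\pi$ between the two $U$-towers. Comparing the minimal gradings of their nonzero elements yields $d(Y,F^*(\s))=d(Y,\s)$.

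The step I expect to be the main obstacle is the last one in the construction: confirming that the \emph{absolute} $\Q$-grading --- not merely the relative grading together with the $\Z[U]$-module structure --- is preserved by the map induced by $F$. This is where one must invoke the full Ozsv\'ath--Szab\'o package for the absolute grading, and it is also where orientation-preservation of $F$ is genuinely used: an orientation-reversing self-diffeomorphism would instead identify $HF^+(Y,\s)$ with $HF^+(-Y,\s)$ and send correction terms to their negatives. Everything else in the argument is routine bookkeeping.
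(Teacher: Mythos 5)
Your proposal is correct and follows essentially the same route as the paper, which simply invokes the diffeomorphism invariance of the Heegaard Floer package and the induced commutative diagrams relating the $HF^\infty\to HF^+$ maps for $\s$ and $F^*(\s)$. You supply considerably more detail than the paper does --- in particular you correctly isolate preservation of the absolute $\Q$-grading as the one point requiring the full Ozsv\'ath--Szab\'o machinery, a point the paper's two-sentence argument passes over in silence.
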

Theorem \ref{HeegaardFloerTheoremOnPullbackOfCorrectionTerms} is a consequence of the diffeomorphism invariance of the Heegaard Floer groups $HF^\circ (Y,\s)$, $\circ \in \{\infty, -, +, \widehat{\phantom{HF}}\}$. Indeed, this invariance induces commutative diagrams relating the long exact sequences of the triples $\left( HF^-(Y,\s), HF^\infty(Y,\s), HF^+(Y,\s)\right)$ and  $\left( HF^-(Y,F^*(\s)), HF^\infty(Y,F^*(\s)), HF^+(Y,F^*(\s))\right)$, from which Theorem \ref{HeegaardFloerTheoremOnPullbackOfCorrectionTerms} readily follows.  
\vskip2mm
Returning to a $q$-periodic knot $K$ and its prime-power fold cyclic branched cover $Y$, understanding the fixed point set of $F^*$ allows for a prediction of certain correction terms for $Y$ to appear with multiplicities divisible by $q$, and the absence of such multiplicities obstructs $q$-periodicity of $K$. For instance, if $F^*$ is fixed-point free, then all but one value of the correction terms, appear with multiplicity divisible by $q$. 

Information about the fixed-point set of $F^*$ can be obtained by considering the first homology of $\overline Y$,  the corresponding cyclic cover of $S^3$ branched along the quotient knot $\overline K$ of $K$. If for an Abelian group $H$ and a prime $\ell$ we let $H_\ell$ denote the $\ell$-primary subgroup of $H$ (the set of elements of order a power of $\ell$), we have the following result from \cite[Proposition 2.5]{N1} the proof of which is based on a transfer argument similar to that in Section \ref{ProofOfProposition19} in the proof of
Theorem \ref{AbsenceOfL}.  
\begin{rem} \label{qln} Throughout this paper $q$ and $\ell$ will denote two distinct primes and $n$ will be a power of some prime number.
\end{rem}  
\begin{thm}[Proposition 2.5 in \cite{N1}]  \label{SwateeTheoremAboutFixedPointSets}
Let $q$, $\ell$ and $n$ be as in Remark \ref{qln}, and let $Y$ and $\overline Y$ be the $n$-fold cyclic covers of $S^3$ branched along a $q$-periodic knot $K$ and its quotient knot $\overline K$ respectively.  
Then 
\begin{equation} \label{FixedPointSetOfFStar}
\text{Fix}\left( F_*|_{H_1(Y;\mathbb Z)_\ell}\right)  \cong H_1 ( \overline {Y};\mathbb Z)_\ell.
\end{equation}
\end{thm}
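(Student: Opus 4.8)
The plan is to realize $\overline Y$ as the quotient of $Y$ by the order-$q$ diffeomorphism $F$, and then to extract the statement from a transfer argument with coefficients in which $q$ is invertible. To set up the covering-theoretic picture, let $p\colon Y\to S^3$ be the $n$-fold cyclic branched cover along $K$, let $\pi\colon S^3\to S^3/\langle f\rangle=S^3$ be the quotient map of the periodic action (branched along the quotient axis $\overline B$), and recall from \cite{N1} that $f$ lifts to an order-$q$ diffeomorphism $F\colon Y\to Y$ with $p\circ F=f\circ p$. Since $\pi\circ f=\pi$, the composite $\pi\circ p$ is $\langle F\rangle$-invariant and descends to a map $g\colon Y/\langle F\rangle\to S^3$ with $g\circ\rho=\pi\circ p$, where $\rho\colon Y\to Y/\langle F\rangle$ is the quotient map.

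The core geometric step is to identify $Y/\langle F\rangle$ with $\overline Y$. Passing to complements, set $Y'=Y\setminus p^{-1}(K\cup B)$, so that $p$ induces the cyclic $\mathbb Z/n$-cover $Y'\to S^3\setminus(K\cup B)$, on which $f$ acts freely (as $B={\rm Fix}(f)$ is disjoint from $K$) with quotient $S^3\setminus(\overline K\cup\overline B)$. Because $f$ acts as the identity on $H_1(S^3\setminus K;\mathbb Z)\cong\mathbb Z$, the lift $F$ commutes with the deck group $\mathbb Z/n$; since moreover $F$ has order $q$ and $\gcd(n,q)=1$, the composite $Y'\to S^3\setminus(\overline K\cup\overline B)$ is a regular cover with deck group $\mathbb Z/n\times\mathbb Z/q$, in which $\langle F\rangle$ is the $\mathbb Z/q$-factor. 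Thus $Y'/\langle F\rangle\to S^3\setminus(\overline K\cup\overline B)$ is the cyclic $\mathbb Z/n$-cover, and one computes its monodromy to send a meridian of $\overline K$ to a generator of $\mathbb Z/n$ and a meridian of $\overline B$ to $0$ — the latter again using $\gcd(n,q)=1$, since the $q$-th power of such a meridian lifts to a meridian of $B$, which has trivial monodromy in $Y'\to S^3\setminus(K\cup B)$. Hence $Y'/\langle F\rangle$ extends, trivially across $\overline B$ and with $\mathbb Z/n$-branching across $\overline K$, to the $n$-fold cyclic branched cover of $(S^3,\overline K)$; that is, $Y/\langle F\rangle\cong\overline Y$, and $\rho\colon Y\to\overline Y$ is a branched cover with deck group $\langle F\rangle\cong\mathbb Z/q$.

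With this in hand I would run the transfer. For a finite group $G_0$ acting on a space $X$ with quotient $Z$, and any ring $R$ in which $|G_0|$ is a unit, one has $H_*(Z;R)\cong H_*(X;R)^{G_0}$, the isomorphism being induced by the averaging idempotent $|G_0|^{-1}\sum_{g\in G_0}g_*$ on $H_*(X;R)$. Applied to $\langle F\rangle\cong\mathbb Z/q$ acting on $Y$ with quotient $\overline Y$, and with $R=\mathbb Z_{(\ell)}$ (in which $q$ is a unit since $\ell\neq q$), this gives $H_1(\overline Y;\mathbb Z_{(\ell)})\cong H_1(Y;\mathbb Z_{(\ell)})^{F_*}$. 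Since $Y$ and $\overline Y$ are rational homology spheres, their first homology groups are finite, and by universal coefficients $H_1(W;\mathbb Z_{(\ell)})\cong H_1(W;\mathbb Z)\otimes\mathbb Z_{(\ell)}=H_1(W;\mathbb Z)_\ell$ for $W\in\{Y,\overline Y\}$, compatibly with the $F_*$-action; composing these isomorphisms gives
$$H_1(\overline Y;\mathbb Z)_\ell\ \cong\ \bigl(H_1(Y;\mathbb Z)_\ell\bigr)^{F_*}\ =\ \text{Fix}\bigl(F_*|_{H_1(Y;\mathbb Z)_\ell}\bigr),$$
which is \eqref{FixedPointSetOfFStar}.

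The step I expect to be the main obstacle is the identification $Y/\langle F\rangle\cong\overline Y$ — concretely, verifying that the quotient acquires no branching along the quotient axis $\overline B$ and that its unbranched part is the cyclic cover — which is precisely where coprimality of $n$ and $q$ enters, through the analysis of $F$ near $p^{-1}(B)$ sketched above. Once that is settled, the transfer and the passage between $\mathbb Z_{(\ell)}$-coefficients and $\ell$-primary integral homology are routine.
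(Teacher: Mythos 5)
Your proof is correct and follows essentially the route the paper indicates: the paper does not prove this statement itself but cites \cite{N1} (Proposition 2.5 there) and notes only that the proof is a transfer argument of the same kind used for Theorem \ref{AbsenceOfL}, which is precisely what you carry out by identifying $\overline Y$ with $Y/\langle F\rangle$ and applying the transfer isomorphism $H_*(X/G;R)\cong H_*(X;R)^{G}$ for $|G|$ invertible in $R$, here with $R=\mathbb Z_{(\ell)}$ and $\ell\neq q$. The only substantive difference is that you also supply the covering-space verification that $Y/\langle F\rangle\cong \overline Y$ (no branching over $\overline B$ in the quotient, using $\gcd(n,q)=1$), a fact the paper simply takes as given in Diagram \eqref{CommutativeDiagramOfCoveringSpaces}.
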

\begin{rem} \label{FCompatibility}
In what follows we shall rely on an affine identification of $Spin^c(Y)$ and $H_1(Y;\mathbb Z)$. In the presence of a diffeomorphism $F:Y\to Y$, we shall further require that these identifications be \lq\lq {\em $F$-compatible}\rq\rq \, in the sense  that if $\s\in Spin^c(Y)$ and $s\in H_1(Y;\mathbb Z)$ are identified with one another, then so are $F^*(\s)$ and $F_*(s)$. 

Such an identification exists if and only if there is a spin$^c$-structure $\s_0\in Spin^c(Y)$ with $F^*(\s_0)=\s_0$, in which case the $F$-compatible affine identification is chosen so as to identify $\s_0$ with $0\in H_1(Y;\mathbb Z)$. We force the existence of such an $\s_0$ by requiring $H_1(Y;\mathbb Z)_2=0$, the latter condition implying that $Y$ possesses a unique spin-structure $\s_0$, and clearly then $F^*(\s_0) = \s_0$. The condition $H_1(Y;\mathbb Z)_2=0$ is automatic if $n$ is a power of $2$, as it will be in all of our examples (where $n=2$). 
\end{rem}
\subsection{Results and Examples}
An immediate consequence of Theorems \ref{HeegaardFloerTheoremOnPullbackOfCorrectionTerms} and \ref{SwateeTheoremAboutFixedPointSets} is our first obstruction to $q$-periodicity. 
\begin{thm} \label{ObstructionToQPeriodicityWhenFixedPointSetIsTrivial}
Let $q$, $\ell$ and $n$ be as in Remark \ref{qln}, and let  $Y$ and $\overline Y$ be the $n$-fold cyclic covers of $S^3$ branched along a $q$-periodic knot $K$ and its quotient knot $\overline K$ respectively.   Assume that $H_1(\overline Y;\mathbb Z)_\ell$ is trivial and let $F:Y\to Y$ be the diffeomorphism induced by the $q$-periodicity of $K$. Then, under an $F$-compatible affine identification of $Spin^c(Y)$ with $H_1(Y;\mathbb Z)$ (Remark \ref{FCompatibility}), the Heegaard Floer correction terms $d(Y,\s)$, corresponding to Spin$^c$-structures $\s \in H_1(Y;\mathbb Z)_\ell - \{0\}$, occur with multiplicities divisible by $q$. 
\end{thm}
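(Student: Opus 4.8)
The plan is to combine the two structural inputs already available: Theorem~\ref{HeegaardFloerTheoremOnPullbackOfCorrectionTerms}, which says that the correction terms are constant on $F^*$-orbits in $Spin^c(Y)$, and Theorem~\ref{SwateeTheoremAboutFixedPointSets}, which under the hypothesis $H_1(\overline Y;\mathbb Z)_\ell=0$ forces $F_*$ to act freely on the non-zero elements of $H_1(Y;\mathbb Z)_\ell$. First I would invoke Remark~\ref{FCompatibility}: since we intend to apply this with $n=2$ (so $H_1(Y;\mathbb Z)_2=0$), there is a unique spin-structure $\s_0$ with $F^*(\s_0)=\s_0$, hence an $F$-compatible affine identification of $Spin^c(Y)$ with $H_1(Y;\mathbb Z)$ sending $\s_0\mapsto 0$. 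Under this identification $F^*$ corresponds to the group automorphism $F_*$ of $H_1(Y;\mathbb Z)$, and $F_*$ preserves the $\ell$-primary subgroup $H_1(Y;\mathbb Z)_\ell$ (automorphisms preserve orders of elements).

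Next I would analyze the action of $F_*$ on the finite set $A := H_1(Y;\mathbb Z)_\ell - \{0\}$. Since $F$ has order $q$, so does $F_*$ (or a divisor of $q$, but $q$ is prime, so the order is $1$ or $q$; the case of order $1$ would make every orbit a singleton and the claim about multiplicities divisible by $q$ would need the stronger statement that $A$ is empty — this is exactly where Theorem~\ref{SwateeTheoremAboutFixedPointSets} enters). By Theorem~\ref{SwateeTheoremAboutFixedPointSets}, $\mathrm{Fix}(F_*|_{H_1(Y;\mathbb Z)_\ell}) \cong H_1(\overline Y;\mathbb Z)_\ell = 0$, so the only fixed point of $F_*$ on $H_1(Y;\mathbb Z)_\ell$ is $0$, i.e.\ $F_*$ acts without fixed points on $A$. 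A finite cyclic group of prime order $q$ acting freely on a set partitions that set into orbits each of size exactly $q$; therefore $|A|$ is divisible by $q$ and, more importantly, $A$ decomposes as a disjoint union of $\langle F_*\rangle$-orbits $\mathcal O_1,\dots,\mathcal O_m$ with $|\mathcal O_i|=q$ for every $i$.

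Finally I would assemble the conclusion. Fix a value $v\in\mathbb R$ occurring as a correction term $d(Y,\s)$ for some $\s\in A$ (using the $F$-compatible identification to regard $A\subseteq Spin^c(Y)$). Its multiplicity, i.e.\ the number of $\s\in A$ with $d(Y,\s)=v$, is a union of full orbits: by Theorem~\ref{HeegaardFloerTheoremOnPullbackOfCorrectionTerms}, $d(Y,F^*(\s))=d(Y,\s)$, so each orbit $\mathcal O_i$ is either entirely contained in $\{\s\in A : d(Y,\s)=v\}$ or disjoint from it. Hence that set is a disjoint union of some of the $\mathcal O_i$'s, each of cardinality $q$, so its cardinality — the multiplicity of $v$ among the correction terms indexed by $H_1(Y;\mathbb Z)_\ell-\{0\}$ — is divisible by $q$, which is precisely the assertion.

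I do not anticipate a serious obstacle here: the argument is essentially a bookkeeping of a free prime-order group action, and both nontrivial ingredients are quoted. The one point requiring mild care is the compatibility of identifications in Remark~\ref{FCompatibility} — one must make sure the fixed-point computation of Theorem~\ref{SwateeTheoremAboutFixedPointSets} (phrased for $F_*$ on $H_1$) transports correctly to a statement about $F^*$ on $Spin^c(Y)$, which is exactly what $F$-compatibility guarantees, so the subtlety is already absorbed into the hypotheses.
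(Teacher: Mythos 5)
Your argument is correct and is essentially the paper's own proof: both use Theorem \ref{SwateeTheoremAboutFixedPointSets} to conclude that $F_*$ fixes only $0$ in $H_1(Y;\mathbb Z)_\ell$, so that the prime-order action partitions $H_1(Y;\mathbb Z)_\ell-\{0\}$ into orbits of size exactly $q$, and then invoke Theorem \ref{HeegaardFloerTheoremOnPullbackOfCorrectionTerms} to see that each correction-term value is attained on a union of full orbits. Your write-up just makes the orbit bookkeeping (and the role of the $F$-compatible identification) more explicit than the paper does.
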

\begin{exa} \label{ExampleOf12a100}
Consider the knot $K=12a_{100}$ from the knot tables \cite{knotinfo}. 
For its $2$-fold branched cover $Y$, we have $H_1(Y;\mathbb Z)_5 \cong \mathbb Z_5\oplus \mathbb Z_5$ (throughout the text we reserve the symbol $\mathbb Z_m$ for the cyclic group $\mathbb Z/m\mathbb Z$ of $m$ elements).

If this $K$ were $3$-periodic, the Alexander polynomial of any quotient knot $\overline K$ would have to be trivial, and hence $H_1(\overline Y;\mathbb Z)=0$ (a claim we justify in Section \ref{ExamplesOfPeriodicAndNonPeriodicKnots}).  According to Theorem \ref{ObstructionToQPeriodicityWhenFixedPointSetIsTrivial}, $3$-periodicity of $K$ would force the correction terms $d(Y,\s)$, $\s \in H_1(Y;\mathbb Z)_5-\{0\}$, to occur with multiplicities divisible by $3$. This does not happen as an explicit computation of the correction terms shows:
\vskip2mm
\begin{center}
\begin{tabular}{|c|c||r|r|r|r|r|} \hline
\multirow{2}{*}{$12a_{100}$}&$d(Y,\s)$ & $-\frac{4}{5}$ &  $-\frac{2}{5}$ & $0$ & $\frac{2}{5}$ & $\frac{4}{5}$ \cr \cline{2-7} 
&Multiplicity of $d(Y,\s)$ & $2$ & $6$ & $6$ & $6$ & $4$ \cr \hline
\end{tabular}\, , $\quad \s \in H_1(Y;\mathbb Z)_5 - \{0\}$.
\end{center}
\vskip2mm
It follows that $12a_{100}$ cannot be $3$-periodic.  \end{exa}

When $H_1(\overline Y;\mathbb Z)_\ell$ is nontrivial, that is when the fixed-point set of $F_*|_{H_1(Y;\mathbb Z)_\ell}$ is nontrivial, it is harder to keep tally of the multiplicities of the correction terms $d(Y,\s)$ with $\s \in H_1(Y;\mathbb Z)_\ell$, and our obstruction to $q$-periodicity in this case is weaker, taking the form of an inequality as seen in Theorem \ref{FirstEquivarianceTheorem} below.  (Note, however, that Theorem \ref{ObstructionToQPeriodicityWhenFixedPointSetIsTrivial} follows as a corollary of  Theorem
\ref{FirstEquivarianceTheorem} when $H_1(\overline Y;\mathbb Z)_\ell = 0$.)
\begin{thm} \label{FirstEquivarianceTheorem}
Let $q$, $\ell$ and $n$ be as in Remark \ref{qln}, let $K\subset S^3$ be a $q$-periodic knot and let $Y$ and $\overline Y$ be the $n$-fold cyclic covers of $S^3$ branched along $K$ and its quotient knot $\overline K$ respectively. Let $F:Y\to Y$ be the diffeomorphism induced by the $q$-periodicity of $K$. Then, under an $F$-compatible affine identification of $Spin^c(Y)$ with $H_1(Y;\mathbb Z)$ (Remark \ref{FCompatibility}), there exists a subgroup $H$ of  $H_1(Y;\mathbb Z)_\ell$, isomorphic to $H_1(\overline Y;\mathbb Z)_\ell$,  such that
\begin{itemize}
\item[(i)] Each correction term $d(Y,\s)$ with $\s \in H_1(Y;\mathbb Z)_\ell-H$, occurs with a multiplicity that is divisible by $q$.
\item[(ii)]  Let the multiplicities of correction terms $d(Y,\s)$ with $\s \in H_1(Y;\mathbb Z)_\ell$, be $n_1,n_2, \dots , n_k$ and let $m_i$ be their reductions modulo $q$, that is $m_i \equiv n_i\ ({\rm mod\ }q),$ and $0\le m_i<q$. Then 
$$m_1\, + \, m_2\, + \, \cdots \, +\, m_k \, \le \vert\, H\, \vert.$$
\end{itemize}
\end{thm}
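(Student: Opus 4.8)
The plan is to push the whole question into the finite abelian $\ell$-group $A:=H_1(Y;\mathbb Z)_\ell$ and then run an elementary orbit-counting argument. First I would fix an $F$-compatible affine identification $Spin^c(Y)\cong H_1(Y;\mathbb Z)$ as in Remark \ref{FCompatibility}, that is, one carrying some $F^*$-fixed spin$^c$-structure $\s_0$ to $0\in H_1(Y;\mathbb Z)$. Since $F_*$ is a group automorphism of $H_1(Y;\mathbb Z)$, it restricts to an automorphism $\tau:=F_*|_A$ of $A$. Transporting the correction-term function along this identification, write $d$ for the resulting map $s\mapsto d(Y,\s)$ on $H_1(Y;\mathbb Z)$; then $F$-compatibility (under which $F^*(\s)$ corresponds to $F_*(s)$) combined with Theorem \ref{HeegaardFloerTheoremOnPullbackOfCorrectionTerms} gives $d(F_*(s))=d(s)$ for every $s$, so in particular $d$ is constant on the orbits of the cyclic group $\langle\tau\rangle$ acting on $A$.

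Next I would set $H:=\mathrm{Fix}(\tau)=\mathrm{Fix}\big(F_*|_{H_1(Y;\mathbb Z)_\ell}\big)$. This is a subgroup of $A$, being the fixed set of a group automorphism, and by Theorem \ref{SwateeTheoremAboutFixedPointSets} it is isomorphic to $H_1(\overline Y;\mathbb Z)_\ell$; this is the subgroup $H$ required by the statement. Because $F$ has order $q$ and $q$ is prime, $\tau$ has order $1$ or $q$, so every $\langle\tau\rangle$-orbit in $A$ has cardinality $1$ or $q$, and the singleton orbits are exactly the points of $H$. Hence $A\setminus H$ is $\tau$-invariant and decomposes as a disjoint union of $q$-element orbits; more generally, for any $\tau$-invariant subset $S\subseteq A$ the set $S\setminus H$ is a union of $q$-element orbits, so $|S\setminus H|\equiv 0\pmod q$.

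It remains to examine the fibers of $d$. For a value $v$ attained by $d$ on $A$, put $D_v:=\{s\in A:d(s)=v\}$; since $d$ is $\tau$-invariant, $D_v$ is $\tau$-invariant, so the observation above yields $|D_v\setminus H|\equiv 0\pmod q$, equivalently $|D_v|\equiv |D_v\cap H|\pmod q$. Conclusion (i) is then immediate: if $\s$ corresponds to $s\in A\setminus H$ and $v=d(Y,\s)$, the number of elements of $A\setminus H$ on which $d$ takes the value $v$ is $|D_v\setminus H|$, a multiple of $q$. For conclusion (ii), let $v_1,\dots,v_k$ be the distinct values taken by $d$ on all of $A$, so that $n_i=|D_{v_i}|$ and hence $m_i\equiv|D_{v_i}\cap H|\pmod q$ with $0\le m_i<q$; since $a\bmod q\le a$ for every integer $a\ge 0$ and the sets $D_{v_1}\cap H,\dots,D_{v_k}\cap H$ partition $H$ (each point of $H$ has a $d$-value, necessarily one of the $v_i$), summing over $i$ gives $m_1+\dots+m_k\le\sum_{i=1}^k|D_{v_i}\cap H|=|H|$.

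I do not expect a genuine difficulty here: granting Theorems \ref{HeegaardFloerTheoremOnPullbackOfCorrectionTerms} and \ref{SwateeTheoremAboutFixedPointSets}, the argument is purely formal, and the only care needed is bookkeeping — checking that $F$-compatibility converts the diffeomorphism invariance of $d$ into literal $\tau$-invariance on $A$ (which legitimizes both the normalization $\s_0\mapsto 0$ and the restriction to the $\ell$-primary part), and keeping straight that the multiplicity in (i) is the one counted over $A\setminus H$: over all of $A$, a value can have multiplicity not divisible by $q$, precisely when it is also attained at some $\tau$-fixed $\s\in H$, and measuring exactly that discrepancy is the role of part (ii).
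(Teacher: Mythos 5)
Your proof is correct and follows essentially the same route as the paper: take $H=\mathrm{Fix}(F_*|_{H_1(Y;\mathbb Z)_\ell})\cong H_1(\overline Y;\mathbb Z)_\ell$ via Theorem \ref{SwateeTheoremAboutFixedPointSets}, use primality of $q$ to see that every $F_*$-orbit outside $H$ has exactly $q$ elements, and invoke Theorem \ref{HeegaardFloerTheoremOnPullbackOfCorrectionTerms} to make $d$ constant on orbits. Your fiber-by-fiber bookkeeping for part (ii) is a slightly more explicit rendering of the paper's closing sentence, but it is not a different argument.
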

\begin{exa}  \label{AlgPer}
Consider the knots $7_4$ and $9_2$ from the knot tables \cite{knotinfo}. For each of these, the Alexander polynomial is  
$$\Delta(t)=\Delta_{7_4}(t)=\Delta _{9_2}(t)= 4t^2 -7t + 4.$$
Note that $\Delta(-1) = 15$ showing that the first homology of the double branched cover along any knot with this polynomial is ${\mathbb Z_5}\oplus {\mathbb Z_3}$.

Let $K\, =\, 7_4 \cs 7_4 \cs 9_{2}$ and suppose that $K$ has period 3.  Then the Alexander polynomial of the quotient knot $\overline K$ is forced to be $\Delta_{\overline K}(t) = 4t^2 -7t + 4$ (as explained in Section \ref{ExamplesOfPeriodicAndNonPeriodicKnots}). Let $Y$ and $\overline Y$ denote the double branched covers along $K$ and $\overline K$, respectively. Then $H_1(Y;\mathbb Z)_5 \cong \Z _5 \oplus \Z _5 \oplus \Z _5$ and $H_1(\overline Y ; \mathbb Z)_5 \cong \Z _5.$ By Theorem \ref{FirstEquivarianceTheorem} 
the fixed point set of the generator of the $\Z _3$ action on $H_1(Y)$ is a subgroup $H$  isomorphic to $ \Z _5$ and the sum of the mod 3 multiplicities of the correction terms should be bounded above by $\vert\, H\, \vert =5$. 

The table below shows the correction terms $d(Y,\s)$ with $s\in H_1(Y;\mathbb Z)$ with their corresponding multiplicities. We see $2$ corrections terms with multiplicities $24$ and $6$, respectively,  but $9$ distinct correction terms which do not have multiplicities divisible by $3$.  By Theorem \ref{FirstEquivarianceTheorem}, $K$ cannot have period 3. Specifically, adding the modulo 3 reductions of the multiplicities gives: $2+2+2+2+1+2+1+1+1 = 10 > 5 =\vert H \vert.$ 
\vskip5mm
\begin{center}
\begin{tabular}{|c|c|c|c|c|c|c|c|c|c|c|c|c|} \hline 
$d(Y,\s)$&  $ -\frac{29}{10}$ & $-\frac{5}{2}$ & $-\frac{17}{10}$ & $-\frac{13}{10}$ & $-\frac{9}{10}$ & $-\frac{1}{2}$& $-\frac{1}{10}$& $\frac{3}{10}$& $\frac{7}{10}$& $\frac{11}{10}$&$\frac{3}{2}$\cr \hline
Multiplicity of $d(Y,\s)$ &  $8$ & $8$ & $20$ & $24$ & $8$  & $16$ & $20$ & $10$ & $6$ & $4$ &$1$\cr \hline
\end{tabular}
\end{center}
\vskip2mm
\end{exa}
In Section \ref{ExamplesOfPeriodicAndNonPeriodicKnots} we demonstrate that Examples \ref{ExampleOf12a100} and \ref{AlgPer} pass several previously known obstructions to knot periodicity, indicating that Theorems \ref{ObstructionToQPeriodicityWhenFixedPointSetIsTrivial} and \ref{FirstEquivarianceTheorem} pick out information about the knots not contained in said obstructions.  
\begin{rem} \label{RemarkAboutTwoPeriodicity}
The Heegaard Floer correction terms $d(Y,\s)$ of a rational homology $3$-sphere, are invariant under conjugation of Spin$^c$-structures: $d(Y,\s) = d(Y,\bar \s)$. This \lq\lq built-in\rq\rq \, $\mathbb Z_2$--symmetry of the correction terms,  unfortunately makes it difficult to use Theorems \ref{ObstructionToQPeriodicityWhenFixedPointSetIsTrivial} and \ref{FirstEquivarianceTheorem} to obstruct $2$-periodicity of knots. However, see \cite{LinkFloer}. 
\end{rem}

The use of correction terms to obstruct $q$-periodicity (with $q$ a prime) in Theorems \ref{ObstructionToQPeriodicityWhenFixedPointSetIsTrivial} and \ref{FirstEquivarianceTheorem}, relies on the fact that the hypotheses in said theorems assure that $F_*:H_1(Y;\mathbb Z) \to H_1(Y;\mathbb Z)$ has fixed point set $Fix(F_*)$ smaller than $H_1(Y;\mathbb Z)$. Since $q$ is prime, the cardinality of the set $\{s, F_*(s),\dots,F_*^{q-1}(s)\}$ is $q$ for each choice $s\in H_1(Y;\mathbb Z)-Fix(F_*)$, thus obtaining a $q$-fold multiple for the value of the correction terms $d(Y,s)$. 

This reasoning is completely general and guarantees the existence of $q$-fold values of correction terms $d(Y,s)$, $s\in H_1(Y;\mathbb Z)-Fix(F_*)$ whenever one has an order $q$  diffeomorphism $F:Y\to Y$. In the absence of a prime $\ell$ distinct from $q$ satisfying the hypotheses from Theorems \ref{ObstructionToQPeriodicityWhenFixedPointSetIsTrivial} and \ref{FirstEquivarianceTheorem} (as happens, for instance, when $H_1(Y;\mathbb Z)$ is itself $q$-primary), ensuring that $H_1(Y;\mathbb Z)-Fix(F_*)$ is nonempty becomes harder. Nevertheless, we submit the following nontriviality criterion. 
%
%
\begin{thm} \label{AbsenceOfL} Let $q$ be an odd prime and $n$ a power of a prime. Let $K$ be a $q$-periodic knot with quotient knot $\overline K$, and let $Y$ and $\overline Y$ be their $n$-fold cyclic branched covers. Let $F_{*,q}$ be the restriction of $F_*:H_1(Y;\mathbb Z)\to H_1(Y;\mathbb Z)$ to $H_1(Y;\mathbb Z)_q$, and let   
$$Fix(F_{*,q})\cong \mathbb Z_q^{m_1}\oplus \mathbb Z_{q^2}^{m_2} \oplus \dots \oplus Z_{q^k}^{m_k},$$
for some natural numbers $k, m_1,\dots, m_k$. Then 
\begin{equation} \label{rankInequalityForQPrimaryGroups}
|Fix(F_{*,q})| \le q^{m_1+\dots+m_k} \cdot |H_1(\overline Y;\mathbb Z)_q|.
\end{equation}
In particular, if $q^{m_1+\dots+m_k} \cdot |H_1(\overline Y;\mathbb Z)_q|< |H_1(Y;\mathbb Z)_q|$ then $H_1(Y;\mathbb Z)_q-Fix(F_{*,q})$ is nontrivial and each correction term $d(Y,s)$ with $s\in H_1(Y;\mathbb Z)_q-Fix(F_{*,q})$ occurs with a multiplicity that is divisible by $q$. 
\end{thm}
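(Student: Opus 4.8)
The plan is to run a transfer argument for the $q$-fold branched covering $p\colon Y\to\overline Y$, where I use that $\overline Y$ is identified with the quotient $Y/\langle F\rangle$ of the $\langle F\rangle\cong\mathbb Z_q$ action by the periodicity diffeomorphism (see \cite{N1}). Since $Y$ and $\overline Y$ are rational homology spheres, $H_1(Y;\mathbb Z)$ and $H_1(\overline Y;\mathbb Z)$ are finite abelian groups and every homomorphism between them automatically carries $q$-primary part to $q$-primary part. As in the proof of Theorem~\ref{SwateeTheoremAboutFixedPointSets} (i.e.\ \cite[Proposition 2.5]{N1}), I would invoke the classical transfer homomorphism $\tau\colon H_1(\overline Y;\mathbb Z)\to H_1(Y;\mathbb Z)$ for the branched covering (built, as usual, from the free covering over the complement of the branch locus, the meridians introduced being immaterial), which satisfies
$$p_*\circ\tau=q\cdot\mathrm{id}_{H_1(\overline Y;\mathbb Z)}\qquad\text{and}\qquad \tau\circ p_*=\sum_{i=0}^{q-1}F_*^{\,i}=:N$$
on $H_1(Y;\mathbb Z)$.

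Next I would restrict everything to $q$-primary parts and then to the subgroup $Fix(F_{*,q})\subseteq H_1(Y;\mathbb Z)_q$. The observation that drives the estimate is that $F_*$ acts as the identity on $Fix(F_{*,q})$, so the norm operator $N$ restricts to multiplication by $q$ there; consequently $\tau\circ p_*=q\cdot\mathrm{id}$ on $Fix(F_{*,q})$, which forces
$$\ker\!\big(p_*|_{Fix(F_{*,q})}\big)\ \subseteq\ \{x\in Fix(F_{*,q}):qx=0\},$$
the subgroup of $q$-torsion elements, whose order is $q^{m_1+\cdots+m_k}$ by the assumed isomorphism $Fix(F_{*,q})\cong\mathbb Z_q^{m_1}\oplus\cdots\oplus\mathbb Z_{q^k}^{m_k}$. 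On the other hand $p_*$ maps $Fix(F_{*,q})$ into $H_1(\overline Y;\mathbb Z)_q$, so the identity $|Fix(F_{*,q})|=|\ker(p_*|_{Fix(F_{*,q})})|\cdot|\mathrm{im}(p_*|_{Fix(F_{*,q})})|$ gives exactly
$$|Fix(F_{*,q})|\ \le\ q^{m_1+\cdots+m_k}\cdot|H_1(\overline Y;\mathbb Z)_q|,$$
which is \eqref{rankInequalityForQPrimaryGroups}.

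For the final clause, if $q^{m_1+\cdots+m_k}\cdot|H_1(\overline Y;\mathbb Z)_q|<|H_1(Y;\mathbb Z)_q|$ then the inequality just established makes $Fix(F_{*,q})$ a proper subgroup of $H_1(Y;\mathbb Z)_q$, so $H_1(Y;\mathbb Z)_q-Fix(F_{*,q})$ is non-empty. This set is the complement of an $F_*$-invariant subgroup inside the $F_*$-invariant group $H_1(Y;\mathbb Z)_q$, hence a union of $\langle F_*\rangle$-orbits; since $q$ is prime and none of its points is $F_*$-fixed, each orbit has exactly $q$ elements. Reading $Spin^c(Y)$ through an $F$-compatible affine identification with $H_1(Y;\mathbb Z)$ (Remark~\ref{FCompatibility}), Theorem~\ref{HeegaardFloerTheoremOnPullbackOfCorrectionTerms} shows that $d(Y,\cdot)$ is constant on each such orbit, so every value $d(Y,s)$ with $s\in H_1(Y;\mathbb Z)_q-Fix(F_{*,q})$ is attained with multiplicity divisible by $q$.

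I expect the one point that needs care to be the purely group-theoretic bookkeeping: recognizing that $q^{m_1+\cdots+m_k}$ is precisely the order of the $q$-torsion subgroup of $Fix(F_{*,q})$ and that, because $N=q\cdot\mathrm{id}$ on $Fix(F_{*,q})$, this number bounds $|\ker(p_*|_{Fix(F_{*,q})})|$ from above. Setting up the transfer for the branched (rather than honest) covering and verifying the norm formula $\tau\circ p_*=N$ are standard and would simply be quoted from \cite{N1}; I also note that the argument is uniform with the $\ell\ne q$ case (Theorem~\ref{SwateeTheoremAboutFixedPointSets}), where the extra factor $q^{m_1+\cdots+m_k}$ disappears because $p_*|_{Fix}$ is then injective.
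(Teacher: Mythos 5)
Your proposal is correct and follows essentially the same route as the paper: both rest on the transfer map for the $q$-fold branched covering $\Pi\colon Y\to\overline Y$ and the relation $\mu_*\circ\Pi_*=q\cdot\mathrm{id}$ on $Fix(F_{*,q})$, followed by an order count. The only (immaterial) difference is bookkeeping --- you bound $|Fix(F_{*,q})|$ via the kernel--image decomposition of $\Pi_*|_{Fix(F_{*,q})}$, noting $\ker\subseteq\{x: qx=0\}$, whereas the paper writes $|Fix(F_{*,q})|=q^{m_1+\dots+m_k}\cdot|\mathrm{Im}(\cdot q)|$ and uses $\mathrm{Im}(\cdot q)\subseteq\mathrm{Im}(\mu_*)$; these yield the identical inequality.
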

\begin{cor} \label{CorollaryForTheCaseOfQPrimaryGroups}
Under the hypothesis of the previous theorem, assume additionally that $H_1(Y;\mathbb Z)_q \cong \mathbb Z_{q^k}$ for some natural number $k>1$. Then $|Fix(F_{*,q})|\le q\cdot |H_1(\overline Y;\mathbb Z)_q|$ and if 
\begin{equation} \label{CORrankInequalityForQPrimaryGroups}
q\cdot |H_1(\overline Y;\mathbb Z)_q| <  |H_1( Y;\mathbb Z)_q|,
\end{equation}
then 
there is a set $S$ of correction terms of $Y$, such that:
\begin{itemize}
\item[(i)] Each correction term $d(Y,\s) \in S$  occurs with a multiplicity that is divisible by $q$, and
\item[(ii)] The number of correction terms in $S$, counting multiplicity, is greater than or equal to 
$ |H_1( Y;\mathbb Z)_q| -  q\cdot |H_1(\overline Y;\mathbb Z)_q|$.
\end{itemize}
\end{cor}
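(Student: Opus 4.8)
The plan is to specialize Theorem \ref{AbsenceOfL} to the hypothesis $H_1(Y;\mathbb Z)_q\cong\mathbb Z_{q^k}$ and then count. First I would observe that any subgroup of a cyclic group is cyclic, so $Fix(F_{*,q})$, being a subgroup of $H_1(Y;\mathbb Z)_q\cong\mathbb Z_{q^k}$, is itself cyclic: in the notation of Theorem \ref{AbsenceOfL} this forces all but one of the $m_i$ to vanish, and the single nonzero one equals $1$. Hence $q^{m_1+\dots+m_k}=q^1=q$, and the inequality \eqref{rankInequalityForQPrimaryGroups} collapses to $|Fix(F_{*,q})|\le q\cdot|H_1(\overline Y;\mathbb Z)_q|$, which is the first assertion.

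Next, assuming \eqref{CORrankInequalityForQPrimaryGroups}, I would deduce that $|Fix(F_{*,q})|\le q\cdot|H_1(\overline Y;\mathbb Z)_q| < |H_1(Y;\mathbb Z)_q|$, so the complement $H_1(Y;\mathbb Z)_q-Fix(F_{*,q})$ is nonempty. By the last sentence of Theorem \ref{AbsenceOfL}, every correction term $d(Y,s)$ with $s$ in this complement occurs with multiplicity divisible by $q$; let $S$ be the (multi)set of all such correction terms, which gives (i). For (ii) I would count the number of Spin$^c$-structures contributing: the complement $H_1(Y;\mathbb Z)_q-Fix(F_{*,q})$ has cardinality $|H_1(Y;\mathbb Z)_q|-|Fix(F_{*,q})| \ge |H_1(Y;\mathbb Z)_q|-q\cdot|H_1(\overline Y;\mathbb Z)_q|$, using the bound from the first part. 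Each such $s$ contributes a correction term to $S$, counted with multiplicity, so the total count of elements of $S$ with multiplicity is at least $|H_1(Y;\mathbb Z)_q|-q\cdot|H_1(\overline Y;\mathbb Z)_q|$, which is (ii).

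The only subtlety worth flagging is bookkeeping of ``counting multiplicity'': the set $S$ should be understood as the image of $H_1(Y;\mathbb Z)_q-Fix(F_{*,q})$ under $s\mapsto d(Y,s)$, recorded with the multiplicity with which each value is attained on that complement, so that the count in (ii) is literally the cardinality of the complement. Since distinct Spin$^c$-structures may share a correction-term value, one must be careful to phrase (ii) as a statement about the size of $S$ counted with multiplicity rather than about the number of distinct values; with that reading the inequality is immediate from the cardinality computation above. No further input is needed — the corollary is a direct combinatorial consequence of Theorem \ref{AbsenceOfL} together with the observation that subgroups of cyclic groups are cyclic.
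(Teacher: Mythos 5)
Your proposal is correct and matches the paper's intent: the paper simply states that the corollary "is an easy consequence of Theorem \ref{AbsenceOfL}," and your derivation --- subgroups of a cyclic $q$-group are cyclic, so $m_1+\cdots+m_k\le 1$ and \eqref{rankInequalityForQPrimaryGroups} collapses to $|Fix(F_{*,q})|\le q\cdot|H_1(\overline Y;\mathbb Z)_q|$, after which one counts the complement $H_1(Y;\mathbb Z)_q-Fix(F_{*,q})$ --- is exactly that easy consequence. The only cosmetic point is that when $Fix(F_{*,q})$ is trivial all $m_i$ vanish rather than one equalling $1$, but the inequality holds a fortiori in that case, so nothing is lost.
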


Examples of applications of Theorem \ref{AbsenceOfL} and Corollary \ref{CorollaryForTheCaseOfQPrimaryGroups} are given in Section \ref{SectionWithDetailsOnExamples}. 
\subsection{Applications to low-crossing knots} \label{SubSectionOnLowCrossingKnots}
Periodicity of knots with fewer than 12 crossings  has been discussed extensively in the references we have included in the introduction. For the benefit of the reader we recount some of these results below, and add some new findings to the list. As discussed in greater detail in Section \ref{SectionWithDetailsOnExamples}, the results we state here are based on subjecting knots to what we label the sieve of  \lq\lq homological periodicity obstructions\rq\rq \, first, and then running the remaining  knots through the sieve of the Heegaard Floer correction terms periodicity obstruction. As correction terms of two-fold cyclic branched covers of non-alternating knots are difficult to calculate at present, our results focus almost exclusively on alternating knots. 

By the \lq\lq homological periodicity obstructions \rq\rq \, we refer to the three periodicity tests: 
\begin{itemize} 
\item Edmonds' Genus Condition (Section \ref{EdmondsGenusCondition}), 
\item Murasugi's Alexander Polynomial Conditions (Section \ref{MurasugiSection}), and 
\item The Homology Condition (Section \ref{SectionHomologyCondition}). 
\end{itemize}
The selection of these three periodicity obstructions for comparison with the correction terms obstruction is motivated by their relatively easy implementation into Mathematica computer code. This is relevant to the examples below where we examine entire families of knots for periodicity properties, with some families containing thousands of knots. A knot by knot approach, as would for instance be required when using the twisted Alexander polynomial obstruction, would indeed by an overwhelming task.   

We are not aware of an example of a knot that passes all hitherto known periodicity obstructions but fails to pass the correction terms periodicity obstruction, though such an example may well exist. We are however able to exhibit examples of knots whose periodicity is excluded by the correction terms obstruction, even when they pass each of the three homological periodicity obstructions,  providing substantiation for the usefulness of our techniques. 
Details of the calculations presented here are provided in Section \ref{SectionWithDetailsOnExamples}.

\subsubsection{Knots with up to 9 crossings} \label{SectionOn9CrossingKnots}

Periods for knots up to 9 crossings are all known, and are listed in \cite{BZ}. The eight 3-periodic knots in this family are 
$${3_1}, \, {8_{19}}, \, {9_1}, \, {9_{35}}, \, {9_{40}}, \, {9_{41}}, {9_{47}},\, {9_{49}}.$$ 
Indeed, $3_1$, $8_{19}$ and $9_1$ are the $(3,2)$-, $(4,3)$- and $(9,2)$-torus knots respectively (see Example \ref{PeriodsOfTorusKnots}),  while $9_{35}$ is the 3-stranded pretzel knot $P(-3,-3,-3)$.  Periodic diagrams for $9_{40}$, $9_{41}$, $9_{47}$ and $9_{49}$ can be found on page 276 of \cite{BZ}. The only 5- and 7-periodic knots with 9 or fewer crossings are the $(5,2)$- and $(7,2)$-torus knots respectively, and no knots with prime periods $q>7$ exist. 
\subsubsection{Knots with 10 crossings} \label{SectionOn10CrossingKnots}
The knot tables in \cite{BZ} also contain information about periodicity of 10 crossing knots, though the knot $10_3$ is listed there incorrectly as having period 3. Odd prime periods $q>3$ have been determined in \cite{N1}.  

Of the 165 knots with 10 crossings, the ones that pass the homological obstructions for 3-periodicity, are 
$$10_4,\, 10_{87},\, 10_{98},\, 10_{99}, \,\boxed{10_{124}},\, 10_{143},  $$
of which only $10_{124}$, being the $(5,3)$-torus knot, is $3$-periodic. Of the alternating knots $10_4, \, 10_{87},\, 10_{98}$ and $10_{99}$, Heegaard Floer correction terms can be used to exclude the knots $10_4$, $10_{87}$ from having period $3$. The knot $10_{98}$ fails to be 3-periodic by the result from \cite{Prz}, and $10_{143}$ can be shown to not be 3-periodic by the methods of \cite{Y1}. The added symmetry of $10_{99}$ coming from its amphicheirality, causes it to pass many 3-periodicity obstructions, including the one stemming from Heegaard Floer correction terms. Indeed the only test that we know that obstructs $10_{99}$ from 3-periodicity , is the one relying on the hyperbolic structure of its complement \cite{AHW}.  

The 10-crossing knots that pass the homological obstructions for 5-periodicity, are 
$$\boxed{10_{123}}, \, \boxed{10_{124}}, \, 10_{132}, \, 10_{137},$$
of which $10_{123}$ and $10_{124}$ are 5-periodic. We already mentioned that $10_{124}$ is the $(5,3)$-torus knot, and as such is 5-periodic, while a 5-periodic diagram of $10_{123}$ is given in Figure \ref{10and11}. The two remaining knots, both non-alternating, can be excluded from 5-periodicity by examining their HOMFLYPT polynomials as in \cite{T3}, or their Jones polynomials as in \cite{M3}. There are no 10-crossing knots with odd prime period $q>5$. 
\begin{figure}[htb!] 
\centering
\includegraphics[width=6cm]{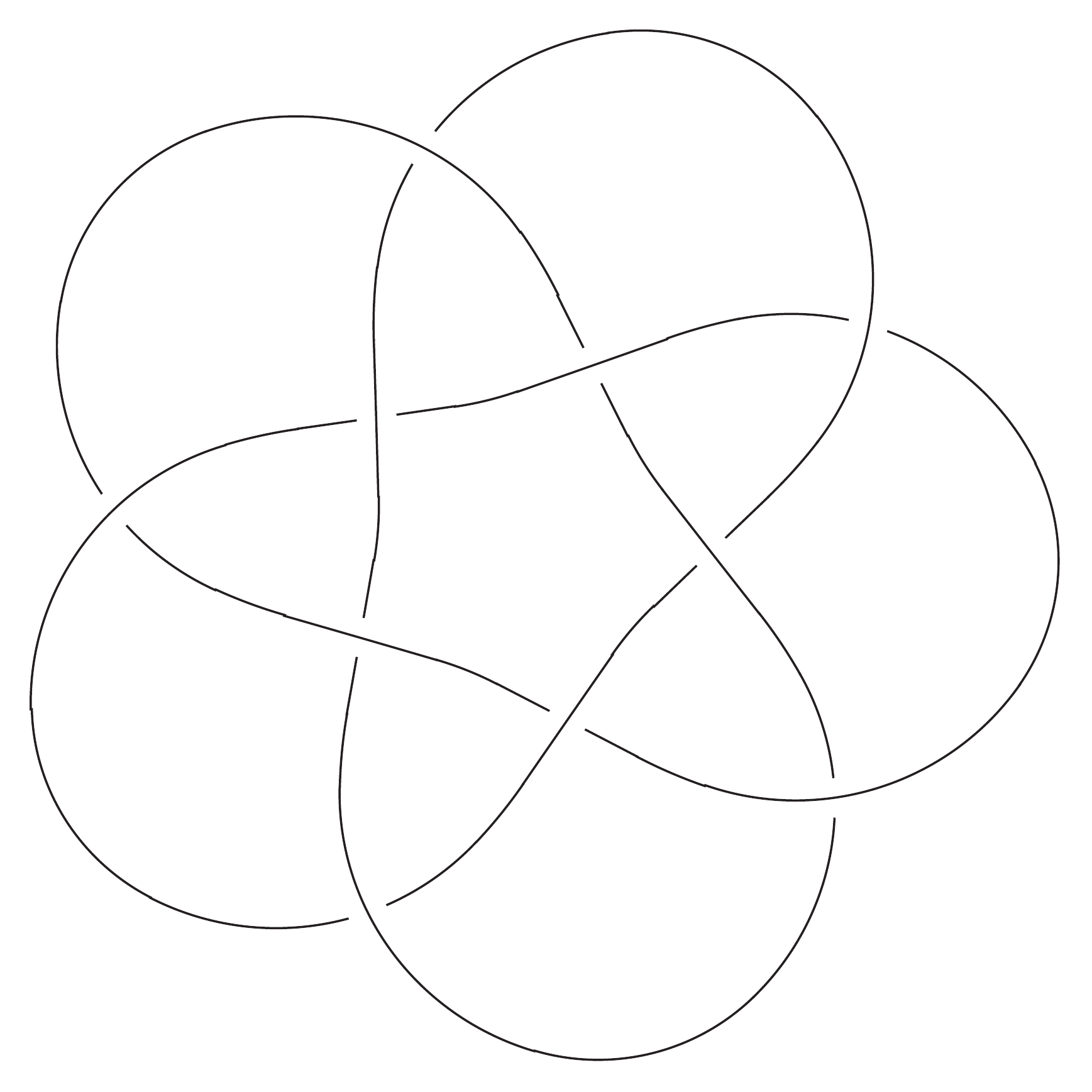}
\caption{Periodic diagram for the knot $10_{123}$.}\label{10and11}
\end{figure} 
\subsubsection{Knots with 11 crossings} \label{SectionOn11CrossingKnots}
Some of the results in this section have previously been obtained for periods $q>3$ in \cite{N1}. Of the 552 knots with 11 crossings, 5 alternating and 9 non-alternating knots pass the homological obstructions for period 3:
$$\begin{array}{l}
11a_{43},\,  11a_{58},\,  11a_{165},\, 11a_{297}, \, 11a_{321},  \cr     
11n_{67},\, 11n_{72}, \, 11n_{77},\,   11n_{97}, 11n_{106}, \, {11n_{126}}, \, {11n_{133}},  \, 11n_{139}, \, 11n_{145}. 
\end{array}
$$ 
Of the alternating knots in the first row, all but $11a_{43}$ are excluded from being 3-periodic by the Heegaard Floer correction terms. $11a_{43}$ is excluded from 3-periodicty by its HOMFLYPT polynomial as in \cite{Prz}. 

Of the non-alternating knots in the second row, all of which are hyperbolic, only the two knots $11n_{126}$ and $11n_{133}$ have full symmetry groups (see Section \ref{SectionOnFullSymmetryGroup} below) with elements of order 3, and are thus the only two knots in this list that may be 3-periodic. They are however excluded from 3-periodicity (and are thus forced to be freely 3-periodic) by their HOMFLYPT polynomials \cite{Prz}. There are no 11-crossing knots with period 3. 

There are no 11-crossing knots with periods $5$ or $7$ either, and there is exactly one 11-periodic 11-crossing knot, namely the $(11,2)$-torus knot $11a_{367}$. There are no 11-crossing knots of prime odd periods $q>11$.  (See~\ref{PeriodCrossings}.)
\subsubsection{Knots with 12 crossings}
We use our techniques to provide a nearly complete classification of 12-crossing alternating knots with odd prime period.  
\begin{thm} \label{PeriodicityForTwelveCrossingAlternatingKnots}
Among the $1288$ alternating 12-crossing knots, there are no examples of knots with odd prime period greater than $3$. There are at most seven $3$-periodic knots, namely 
$$\boxed{12a_{503}}, \,  \boxed{12a_{561}}, \,  \boxed{12a_{615}}, \, 12a_{634}, \,  \boxed{12a_{1019}}, \,  \boxed{12a_{1022}}, \,  \boxed{12a_{1202}}. $$
Of these, the six framed knots are $3$-periodic (Figure \ref{Period3Knots} shows their periodic diagrams) while the remaining knot $12a_{634}$, if $3$-periodic, has a quotient knot with Alexander polynomial $4-7t+4t^{2}$.
\end{thm}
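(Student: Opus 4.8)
The plan is to run the full sieve announced in Section \ref{SubSectionOnLowCrossingKnots} over the explicit list of all $1288$ alternating $12$-crossing knots. First I would treat the odd primes $q > 3$. For such $q$, a $q$-periodic knot $K$ with quotient $\overline K$ satisfies Murasugi's congruence on the Alexander polynomial, together with Edmonds' genus condition; since the genus of a $12$-crossing alternating knot is read off from its (alternating) diagram and is bounded, and since $q \geq 5$ forces strong divisibility constraints on $\Delta_K(t)$ via $\Delta_K(t) \equiv \Delta_{\overline K}(t)^q \cdot (\text{cyclotomic factor}) \pmod{q}$, one checks knot-by-knot (by computer) that no alternating $12$-crossing knot survives. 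This is the routine bulk-computation part: assemble the $1288$ Alexander polynomials and Seifert genera from \cite{knotinfo}, apply the three homological obstructions of Section \ref{SubSectionOnLowCrossingKnots}, and observe the list is emptied for every prime $q \in \{5, 7, 11, \dots\}$ (finitely many to check, since the genus bound caps $q$).

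Next, for $q = 3$, I would again apply the homological sieve (Edmonds, Murasugi, Homology Condition) to all $1288$ knots; this leaves a short list of candidates — the seven knots named in the statement plus possibly a few more — which I then feed into the Heegaard Floer correction-terms obstruction. For each surviving candidate $K$ one computes $H_1(Y;\mathbb Z)$ for the double branched cover $Y$, identifies the relevant $\ell$-primary part ($\ell$ a prime $\neq 3$ dividing $\det K$), determines from Murasugi's conditions the forced Alexander polynomial of $\overline K$ and hence $H_1(\overline Y;\mathbb Z)_\ell$, and then invokes Theorem \ref{FirstEquivarianceTheorem}: the sum of the mod-$3$ reductions of the multiplicities of the correction terms $d(Y,\s)$, $\s \in H_1(Y;\mathbb Z)_\ell$, must be at most $|H|=|H_1(\overline Y;\mathbb Z)_\ell|$. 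Candidates violating this inequality are eliminated, and for $12a_{634}$ specifically one records that all homological and Floer obstructions are passed but Murasugi's conditions pin down the quotient Alexander polynomial to $4 - 7t + 4t^2$. Finally, for the six framed knots I exhibit explicit $3$-periodic diagrams (Figure \ref{Period3Knots}), which settles that they are genuinely $3$-periodic and completes the classification.

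The main obstacle is the correction-terms computation for the double branched covers. Even restricting to alternating knots — where $Y$ bounds a canonical negative-definite plumbing, so that $d$-invariants are computable via Ozsváth–Szabó's lattice/graded-roots machinery — one must carefully compute the full multiset of $d(Y,\s)$ together with the affine $H_1(Y;\mathbb Z)$-labeling in an $F$-compatible way (Remark \ref{FCompatibility}), which requires $H_1(Y;\mathbb Z)_2 = 0$; for $n=2$ this holds automatically since $\det K$ is odd for the relevant candidates. Organizing these computations reliably, and correctly matching spin$^c$-structures to group elements so that the count in part (ii) of Theorem \ref{FirstEquivarianceTheorem} is valid, is the delicate step; everything else is bookkeeping over the knot tables.
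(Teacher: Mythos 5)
Your overall architecture (homological sieve, then correction terms, then explicit periodic diagrams for the six framed knots) matches the paper's, and your treatment of $q>3$ and of the candidates admitting a prime $\ell\neq 3$ with $H_1(\overline Y;\mathbb Z)_\ell$ strictly smaller than $H_1(Y;\mathbb Z)_\ell$ is essentially what the paper does. However, there are two genuine gaps that prevent your sieve from actually arriving at the list of seven knots.

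First, your Floer step relies exclusively on Theorem \ref{FirstEquivarianceTheorem} applied to a prime $\ell\neq 3$ dividing $\det K$. Several of the seventeen knots surviving the homological sieve do not admit a usable such $\ell$: for example $12a_{425}$ (and the other knots in the second row of the paper's list \eqref{SpecialKnotsNeedingExtraTreatment}) has $H_1(Y;\mathbb Z)\cong\mathbb Z_{3^k}$, entirely $3$-primary, and $12a_{376}$ has $H_1(Y;\mathbb Z)_5\cong H_1(\overline Y;\mathbb Z)_5$, so Theorem \ref{FirstEquivarianceTheorem} with $\ell=5$ yields nothing. These knots are eliminated only by the transfer-map argument of Theorem \ref{AbsenceOfL} and Corollary \ref{CorollaryForTheCaseOfQPrimaryGroups}, which bound $|Fix(F_{*,q})|$ on the $q$-primary part itself ($q=3$ here) and thereby force a guaranteed number of correction terms with multiplicity divisible by $3$. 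Without invoking these results your sieve cannot remove those candidates. Second, the knot $12a_{780}$ passes the homological obstructions \emph{and} every correction-term obstruction (all correction terms on $H_1(Y;\mathbb Z)_5-\{0\}$ do occur with multiplicity divisible by $3$, and Theorem \ref{AbsenceOfL} is inconclusive for its $3$-torsion); it is excluded only by the Davis--Livingston cyclotomic factorization conditions of \cite{D--L}, which your proposal never invokes. As written, your argument would terminate with eight candidates rather than seven, so the statement "at most seven" would not be established.
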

%

\begin{figure}[htb!] 
\centering
\includegraphics[width=14cm]{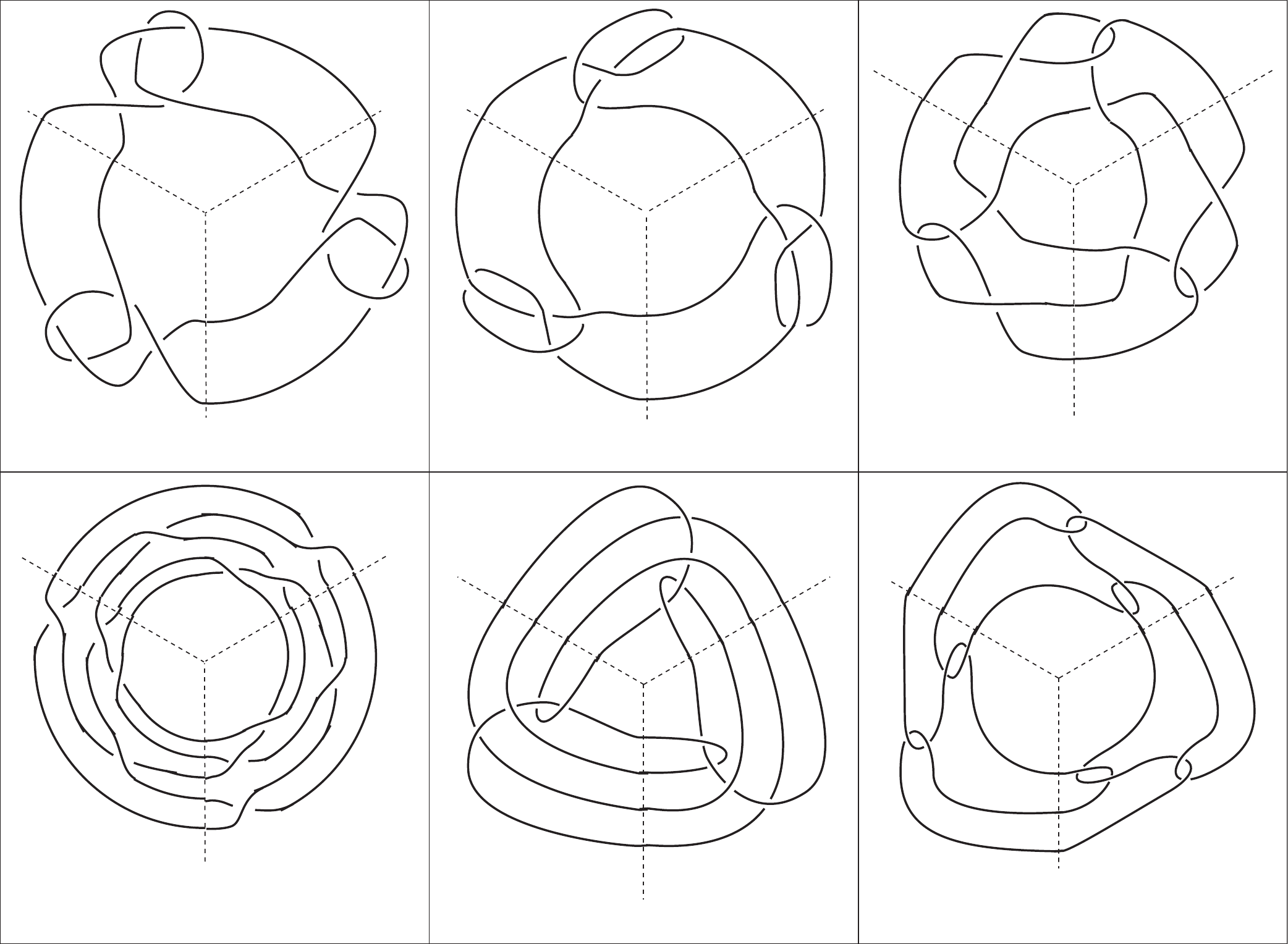}
\put(-350,151){$12a_{503}$}
\put(-220,151){$12a_{561}$}
\put(-85,151){$12a_{615}$}
\put(-350,5){$12a_{1019}$}
\put(-220,5){$12a_{1022}$}
\put(-85,5){$12a_{1202}$}
\caption{Periodic diagrams of certain $3$-periodic 12-crossing alternating knots.}\label{Period3Knots}
\end{figure} 
\subsubsection{Knot with 13, 14 and 15 crossings}
In this category we focus on the alternating knots only, where we are able to give a complete classification of periodic knots of prime periods $q>3$. The exclusion of period $q=3$ from our consideration is due to the large number of  knots that pass the homological obstructions for 3-periodicity.

\begin{thm} \label{131415}
The following provide a complete classification of $q$-periodic alternating knots with 13, 14 or 15 crossings, for $q>3$ a prime. 
\begin{itemize}
\item[(i)] Among the 4,878 alternating knots with 13 crossings, the only knot that possesses a prime period $q>3$ is the $(13,2)$-torus knot $13a_{4878}$. Its only periods are 2 and 13. At most 29 alternating knots with 13 crossings have period 3.
\item[(ii)] Among the 19,536 alternating knots with 14 crossings, the only knot that possesses a prime period $q>3$ is the 7-periodic knot $14a_{19470}$ (see Figure \ref{knot14a19470} for a periodic diagram), and 7 is its only odd prime period.  At most 49 alternating knots with 14 crossings have period 3.
\item[(iii)] Of the 85,263 alternating knots with 15 crossings, the only knots that have a prime period $q>3$ are the knots 
$$  15a_{64035}, \quad  15a_{84903},\quad 15a_{85262},\quad 15a_{85263}.$$ 
Each of these knots has period 5, but no other prime period $q>5$. Periodic diagrams for the first two knots are seen in Figure \ref{15crossingPeriodicKnots}. The knot $15a_{85262}$ is the 5-stranded pretzel knot $P(-3,-3,-3,-3,-3)$, and the knot $15a_{85263}$ is the $(15,2)$-torus knot. At most 133 alternating knots with 15 crossings have period 3.
\end{itemize}
\end{thm}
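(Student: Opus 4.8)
The plan is to carry out a large but finite computer-assisted search, combining the homological periodicity obstructions with the Heegaard Floer correction terms obstruction (Theorems \ref{ObstructionToQPeriodicityWhenFixedPointSetIsTrivial} and \ref{FirstEquivarianceTheorem}) exactly as in the 12-crossing case (Theorem \ref{PeriodicityForTwelveCrossingAlternatingKnots}). First I would fix a prime $q > 3$ and enumerate, from the knot tables \cite{knotinfo}, the alternating knots with $13$, $14$, $15$ crossings. The first sieve is the set of homological obstructions: Edmonds' genus condition, Murasugi's Alexander polynomial conditions, and the homology condition. For each candidate knot $K$ and each prime $q$, these obstructions either rule out $q$-periodicity outright or pin down the finitely many possibilities for the Alexander polynomial $\Delta_{\overline K}(t)$ of a hypothetical quotient knot $\overline K$ (Murasugi's condition forces $\Delta_K(t) \equiv \Delta_{\overline K}(t)^q \cdot (\text{something}) \pmod{q}$, and genus plus the Alexander polynomial of $K$ bound $\deg \Delta_{\overline K}$). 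For crossing numbers $13,14,15$ and $q > 3$ this narrows the field to a short list of survivors.

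Next, for each surviving knot $K$, I would compute the double branched cover $Y$ (it is a rational homology sphere with $H_1(Y;\mathbb Z)$ of order $|\Delta_K(-1)|$, presented by the Goeritz matrix of an alternating diagram), determine $H_1(Y;\mathbb Z)$ and, for each allowed quotient Alexander polynomial, the group $H_1(\overline Y;\mathbb Z)$. Since the knots are alternating, the correction terms $d(Y,\mathfrak s)$ for all $\mathfrak s \in Spin^c(Y)$ are computable by Ozsv\'ath--Szab\'o's formula from the Goeritz form (this is why we restrict to alternating knots). Pick a prime $\ell \mid |H_1(Y;\mathbb Z)|$ with $\ell \ne q$; if $H_1(\overline Y;\mathbb Z)_\ell = 0$, apply Theorem \ref{ObstructionToQPeriodicityWhenFixedPointSetIsTrivial} and check whether every $d(Y,\mathfrak s)$ with $\mathfrak s \in H_1(Y;\mathbb Z)_\ell - \{0\}$ occurs with multiplicity divisible by $q$; otherwise apply the inequality in Theorem \ref{FirstEquivarianceTheorem}(ii), summing the mod-$q$ reductions of the multiplicities over $\mathfrak s \in H_1(Y;\mathbb Z)_\ell$ and comparing with $|H_1(\overline Y;\mathbb Z)_\ell|$. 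A failure of either test excludes $q$-periodicity. For the few knots where no suitable $\ell$ exists (when $H_1(Y;\mathbb Z)$ is $q$-primary), invoke Theorem \ref{AbsenceOfL} and Corollary \ref{CorollaryForTheCaseOfQPrimaryGroups} instead. The knots that survive all of this must then be exhibited as genuinely $q$-periodic by displaying explicit periodic diagrams (Figures \ref{knot14a19470} and \ref{15crossingPeriodicKnots}), or identified with known periodic families (torus knots $13a_{4878}$, $15a_{85263}$; the pretzel knot $15a_{85262} = P(-3,-3,-3,-3,-3)$), together with Example \ref{PeriodsOfTorusKnots} and the standard fact that $P(-3,\dots,-3)$ with $q$ strands is $q$-periodic. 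Finally, to get the precise statement that these are the \emph{only} periods, one checks the torus-knot knots against Example \ref{PeriodsOfTorusKnots} and rules out the remaining primes for $14a_{19470}$ and the period-$5$ knots by the same correction-terms sieve applied to those specific primes.

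The main obstacle is not any single conceptual point but the sheer scale and reliability of the computation: one is processing $4{,}878 + 19{,}536 + 85{,}263$ knots, and for the survivors of the homological sieve one must correctly (a) enumerate all Alexander polynomials compatible with Murasugi's congruence and the genus bound for each $q$, (b) extract the Goeritz form from a tabulated alternating diagram, (c) evaluate the Ozsv\'ath--Szab\'o correction-term formula over all of $Spin^c(Y)$ and tabulate multiplicities along the $\ell$-primary (or $q$-primary) subgroup, and (d) invoke the correct variant (Theorem \ref{ObstructionToQPeriodicityWhenFixedPointSetIsTrivial}, \ref{FirstEquivarianceTheorem}, \ref{AbsenceOfL}, or Corollary \ref{CorollaryForTheCaseOfQPrimaryGroups}) depending on the structure of $H_1(\overline Y;\mathbb Z)$. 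Care is also needed with the reduction ``$\Delta_{\overline K}(t)$ is forced'' step, which for composite-looking polynomials can leave several candidates; one must run the correction-terms test against each candidate and exclude $q$-periodicity only if \emph{all} candidates fail. The ``at most $N$'' clauses for period $3$ simply record the knots that survive the homological sieve for $q=3$ without attempting the (computationally heavier and often inconclusive) correction-terms analysis, so no further argument is required there beyond reporting the count.
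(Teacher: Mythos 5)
Your proposal matches the paper's proof in both strategy and detail: the paper runs exactly this two-stage sieve (Edmonds/Murasugi/Homology Condition in Mathematica, then the Ozsv\'ath--Szab\'o correction-term algorithm for double branched covers of alternating knots, invoking Theorem \ref{ObstructionToQPeriodicityWhenFixedDPointSetIsTrivial} or \ref{FirstEquivarianceTheorem} or Corollary \ref{CorollaryForTheCaseOfQPrimaryGroups} according to the structure of $H_1(\overline Y;\mathbb Z)$), exhibits the surviving knots as periodic via explicit diagrams, torus-knot and pretzel identifications, and records the period-3 survivors of the homological sieve as the ``at most $N$'' counts. No substantive difference from the paper's argument.
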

\begin{figure}[htb!] 
\centering
\includegraphics[width=6cm]{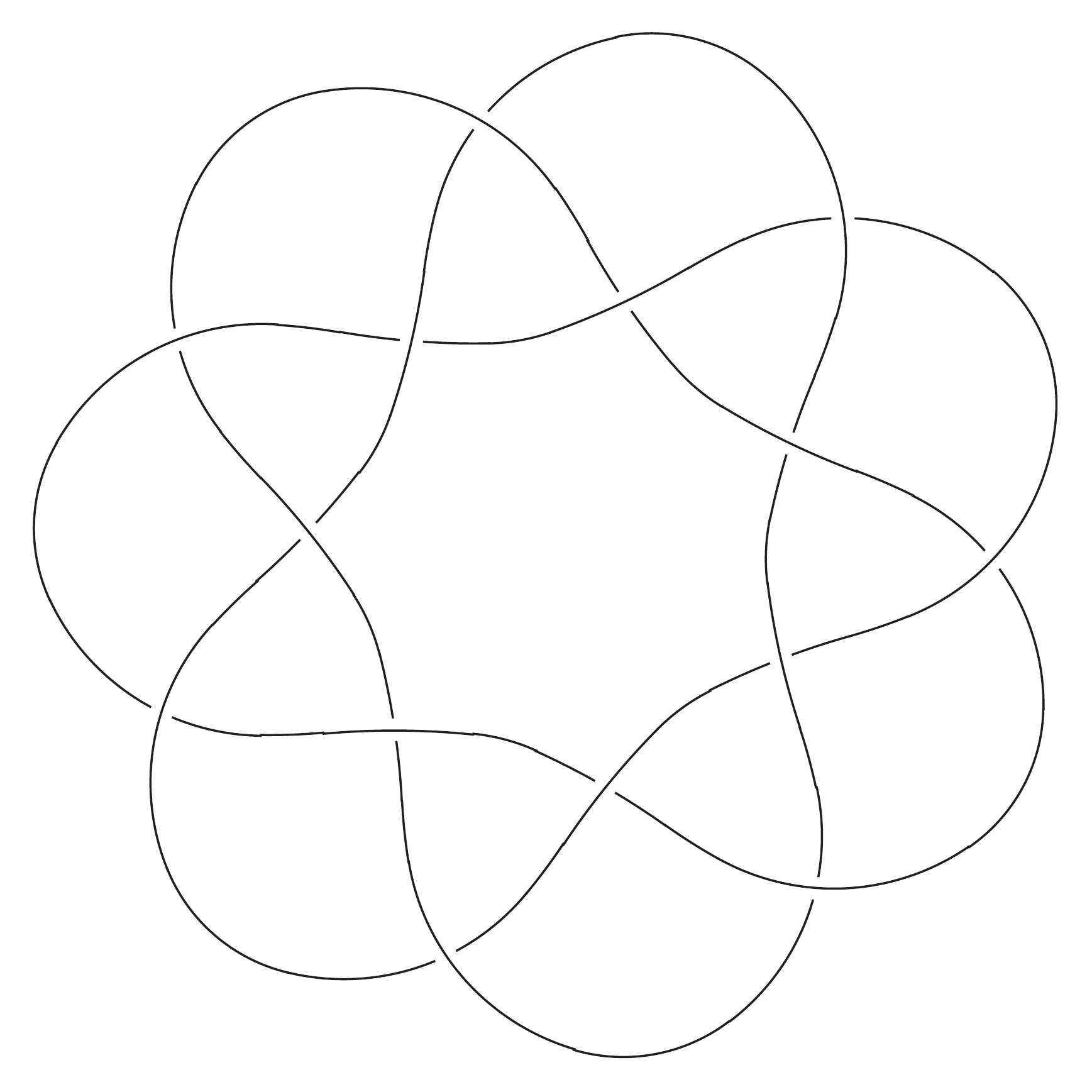}
\caption{A 7-periodic diagram for the knot $14a_{19470}$.}\label{knot14a19470}
\end{figure} 
\begin{figure}[htb!] 
\centering
\includegraphics[width=15cm]{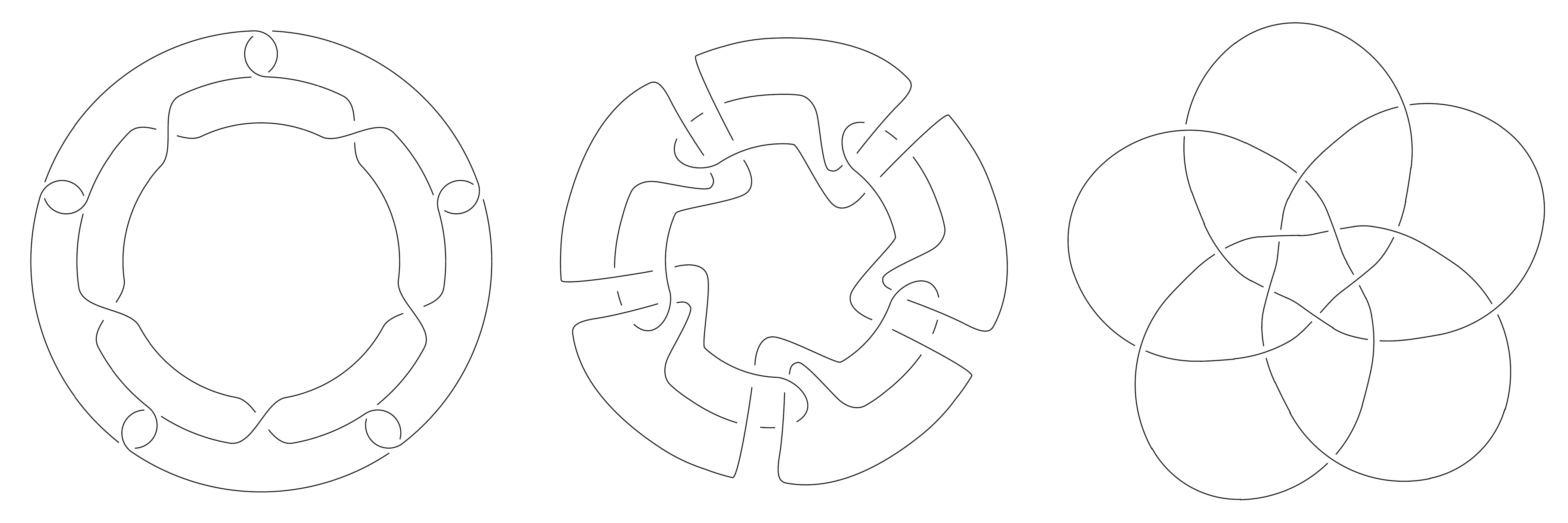}
\put(-370,-20){$15a_{64035}$}
\put(-230,-20){$15a_{64035}$}
\put(-90,-20){$15a_{84903}$}
\caption{Periodic diagrams for the knots $15a_{64035}$ and $15a_{84903}$. The left-most figure gives an alternating 5-periodic diagram for $15a_{64035}$, while the middle figure gives a 5-periodic ribbon diagram for the same knot.}\label{15crossingPeriodicKnots}
\end{figure} 
\subsection{Concluding Remarks and the Full Symmetry Group of a Knot} \label{SectionOnFullSymmetryGroup}
The {\em Full Symmetry Group} $\mathcal S(K)$ of a knot $K$ (see KnotInfo \cite{knotinfo} or Section 10.6 in \cite{kawa}) is the group of diffeomorphisms of the pair $(S^3,K)$, modulo its normal subgroup generated by diffeomorphisms of $(S^3,K)$ isotopic to the identity.  For a hyperbolic knot the symmetry group is always finite, given by the group of isometries of the unique complete hyperbolic structure on the complement of $K$.  Thus, if $K$ is a hyperbolic knot, then $q$-periodicity of $K$ implies the existence of $q$-torsion in $\mathcal S(K)$ (see \cite{AHW} and references therein). 

Conversely however, if $f\in \mathcal S(K)$ is an order $q$ element, there is no guarantee of $q$-periodicity of $K$, even if $K$ is hyperbolic. The reason for this is that $f$ may be fixed point free, rendering $K$ {\em freely $q$-periodic} (a concept not discussed in this work), but not $q$-periodic. For example, $\mathcal S(12n_{276})\cong D_6$ (with $D_6$ being the order $12$ dihedral group) which clearly has elements of order $3$. However, $12n_{276}$ is not $3$-periodic as it does not pass the Murasugi conditions (Section \ref{MurasugiSection}). Other such examples are the hyperbolic knots $9_{48}$, $10_{75}$ and $11a_{366}$, neither of which is $3$-periodic, but each of which has a full symmetry group with elements of order 3. 
\begin{question} \label{QuestionAboutHyperbolicStructure}
Can the hyperbolic structure on the complement of a hyperbolic knot be used to distinguish free periodicity from periodicity of the knot?
\end{question}
In a different direction, a non-hyperbolic knot may be $q$-periodic without $\mathcal S(K)$ having any elements of order $q$. For instance, the torus knots $T_{2,2n+1}$ (Example \ref{PeriodsOfTorusKnots}) have $\mathcal S(T_{2,2n+1})\cong \mathbb Z_2$, but any divisor $q$ of $2n+1$ is a period.  These $q$-periods are thus realized by $f_{q,n}:(S^3,T_{2,2n+1})\to (S^3,T_{2,2n+1}),$ diffeomorphisms that are isotopic to the identity. Recall that torus knots and cable knots are not hyperbolic.

The knot $12a_{634}$ from Theorem \ref{PeriodicityForTwelveCrossingAlternatingKnots} has Full Symmetry Group $\mathcal S(12a_{634}) \cong \mathbb Z_2$ and given that $12a_{634}$ is hyperbolic, it cannot have period $3$. The symmetry groups on KnotInfo were determined by computer calculations as in  \cite{AHW}, and we are not able to eliminate the possibility of period $3$ for $12a_{634}$ without reliance on computers. Besides passing all period $3$ obstructions mentioned in this paper, $12a_{634}$ also passes the obstructions from \cite{D--L, HLN, M1, M3, Prz, T3, Y1, Y2}. 

With regards to the twisted Alexander polynomials obstructions described in \cite{HLN}, the fundamental group of the knot $12a_{634}$ has a unique representation into the dihedral group $D_5$, a representation that is obtained by exploiting the fact that the 2-fold branched cover $Y$ of $12a_{634}$ has $H_1(Y;\mathbb Z)_5 \cong \mathbb Z_5$, and by remembering that $D_5$ is a semidirect product of $\Z _2$ with $\Z _5$. The associated twisted Alexander polynomial of $12a_{634}$ passes the generalized Murasugi condition from \cite{HLN}, and so fails to obstruct 3-periodicity of $12a_{634}$. Despite this, it may well be that twisted polynomials of $K=12a_{634}$ associated to different representations of $\pi_1 (S^3-K)$ could exclude period $3$. 
\begin{conjecture}
If an alternating knot $K$ with crossing number $c$ has odd prime period $q$, then $q$ divides $c$. 
\end{conjecture}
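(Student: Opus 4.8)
The plan is to extract the divisibility from a collision between the (now proved) Tait conjectures on alternating knots and Murasugi-type periodicity congruences for the Jones polynomial. First I would record two facts about a reduced alternating diagram of $K$ with $c$ crossings: by the theorems of Kauffman, Murasugi and Thistlethwaite the span of the Jones polynomial $V_K(t)$ equals $c$, and the coefficients of $V_K$ at its two extreme exponents are $\pm 1$. The second fact is exactly what lets $c$ survive reduction: for any prime $q$, the reduction of $V_K(t)$ modulo $q$ still has span $c$, because its extreme coefficients remain nonzero in $\mathbb{Z}/q$. The conjecture is therefore equivalent to the assertion that $V_K(t)\bmod q$ has span divisible by $q$.

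To reach that, I would feed in Murasugi's congruence for Jones polynomials of $q$-periodic links \cite{M3} (and its refinements \cite{Prz,Y1,T3}): for a $q$-periodic knot $K$ with quotient knot $\overline K$ and axis linking number $\lambda=\mathrm{lk}(K,B)$, the polynomial $V_K(t)$ equals, modulo $q$, a monomial times $\bigl(V_{\overline K}(t)\bigr)^q$ times an explicit ``axis factor'' built from $\lambda$ --- the Jones counterpart of the correction term in the Murasugi--Alexander condition recalled in Section~\ref{MurasugiSection}. The crucial feature is that this congruence uses only \emph{some} equivariant diagram of $K$, never an equivariant \emph{alternating} one, so it combines freely with the span identity applied to an entirely unrelated minimal alternating diagram. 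Since $q$ is prime, $\mathbb{Z}/q$ is a field, so over it the span of a product of Laurent polynomials is the sum of the spans; and by the Frobenius identity $\bigl(V_{\overline K}(t)\bigr)^q \equiv V_{\overline K}(t^q)\pmod q$, whose span is $q\cdot\mathrm{span}(V_{\overline K})$. Combining, $c$ is congruent modulo $q$ to the span of the reduced axis factor, so the conjecture collapses to showing that this last span is divisible by $q$. I would dispatch the common case $\lambda=\pm 1$ first (the axis factor is then trivial), and attack general $\lambda$ --- necessarily coprime to $q$ --- by writing the bracket-state expression for the axis factor and reading off its extreme terms.

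That final step is where I expect the genuine obstruction to sit. The warning comes from the Alexander analogue, where the axis factor is $(1+t+\cdots+t^{\lambda-1})^{q-1}$ and its reduction mod $q$ has span $(\lambda-1)(q-1)$, which is \emph{not} in general a multiple of $q$: so one cannot simply import that computation, and must exhibit genuinely different behavior of the Jones axis factor, presumably by exploiting features unavailable to the Alexander polynomial --- the strict sign-alternation of the coefficients of $V_K$ for alternating $K$ (Thistlethwaite) and the adequacy of a minimal alternating diagram --- to force cancellations or parity constraints that make the span $q$-divisible. A conceptually cleaner alternative would bypass polynomials entirely: prove that every $q$-periodic alternating knot admits a reduced alternating diagram invariant under the period-$q$ rotation. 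Granting one such diagram, no crossing can be fixed by the rotation --- a fixed crossing would be one of the two fixed points of the rotation on the projection $2$-sphere and hence lie on the axis, which is disjoint from $K$, while the order-$q$ rotation (with $q$ odd) cannot act on the two strands above that point as a transposition --- so the $c$ crossings fall into free orbits of size $q$ and $q \mid c$ by the Tait conjecture once more. But producing that invariant diagram is an equivariant form of the Menasco--Thistlethwaite flyping theorem: one must isotope the periodicity diffeomorphism, which a priori respects no diagram at all, into an honest symmetry of some minimal alternating diagram, arranging symmetrically the flypes that relate a given reduced alternating diagram to its rotated copy --- and I expect that equivariant flyping statement to be no easier than the original conjecture.
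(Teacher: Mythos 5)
The statement you were asked to prove is not proved in the paper at all: it is stated as a \emph{conjecture}, supported only by a computer verification (for $q>3$) over all $111{,}528$ alternating knots with $c\le 15$, together with the observation that it fails for non-alternating knots and for period $2$. So there is no proof of record to compare against, and your proposal should be judged on its own terms. On those terms it is not a proof either --- as you yourself say --- because both of your routes terminate in statements that are at least as hard as the conjecture. In the Jones-polynomial route, the entire content is pushed into the claim that the mod-$q$ span of the ``axis factor'' is divisible by $q$, and you give no argument for this; your own computation in the Alexander analogue, where the axis factor $(1+t+\cdots+t^{\lambda-1})^{q-1}$ has mod-$q$ span $(\lambda-1)(q-1)$, shows that nothing formal forces such divisibility. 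There is also a prior technical obstacle you gloss over: the periodicity criteria of Murasugi \cite{M3} and their refinements \cite{Prz,T3,Y1} are not, in general, clean factorization congruences in $\mathbb{F}_q[t^{\pm 1/2}]$ of the form $V_K \stackrel{\cdot}{\equiv} (V_{\overline K})^q\cdot A_\lambda$; they are typically congruences modulo ideals containing additional polynomial relations in $t$ (or statements about individual coefficients), and in such quotient rings the span --- the quantity your argument needs to track --- is not defined. Before any span bookkeeping can begin you would have to establish a genuine identity in the Laurent ring over $\mathbb{F}_q$, which is not what the cited results provide.

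Your second route is the conceptually correct one, and your reduction there is sound: if a $q$-periodic alternating knot ($q$ an odd prime) admits a \emph{reduced alternating} diagram invariant under the rotation, then no crossing is fixed (a fixed crossing would sit at a fixed point of the rotation on the projection sphere, and an order-$q$ rotation with $q$ odd cannot preserve a transverse pair of strands there), so the crossings fall into free $q$-orbits and $q\mid c$ by Kauffman--Murasugi--Thistlethwaite. But the existence of such an invariant minimal diagram is precisely an equivariant refinement of the Menasco--Thistlethwaite flyping theorem: the periodicity diffeomorphism has no a priori relation to any alternating diagram, and one must organize the flypes carrying a reduced alternating diagram to its rotated image into a coherent equivariant isotopy. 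You correctly flag this as the genuine obstruction, and it is not supplied here. In short: the proposal is an intelligent reduction of the conjecture to two other open statements, not a proof, and it is consistent with the paper's treatment of the statement as open.
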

This conjecture holds for $q>3$ for all 111,528 alternating non-trivial knots with crossing number $c \le 15$. The conjecture does not extend to non-alternating knots. For instance, the non-alternating knots $8_{19}$ and $10_{124}$ each have period 3. Also, the $(6,5)$-torus knot, which is non-alternating and has crossing number 24, is 5-periodic.  The conjecture also does not extend to period 2, as any 
torus knot $T_{2,2n+1}$ is alternating with an odd number $(2n+1)$ of crossings and has period 2. 

\vskip3mm
{\bf Organization. } The paper is organized as follows. Section \ref{knotper} discusses knot periodicity. It describes and refines several previously known obstructions to knot periodicity, and revisits Examples \ref{ExampleOf12a100} and \ref{AlgPer} to show that they pass said obstructions. Proofs of Theorem \ref{ObstructionToQPeriodicityWhenFixedPointSetIsTrivial}, \ref{FirstEquivarianceTheorem}, \ref{AbsenceOfL} and \ref{PeriodicityForTwelveCrossingAlternatingKnots} are also provided in this section. Section \ref{SectionWithDetailsOnExamples} provides calculations of the Heegaard Floer correction terms  for the 2-fold brached covers of the knots mentioned in Section \ref{SubSectionOnLowCrossingKnots}. 
\section{Knot periodicity}\label{knotper}
This section begins with background material on knot periodicity and reviews as well as refines some known obstructions to periodicity alluded to in the introduction. The proofs of Theorems \ref{ObstructionToQPeriodicityWhenFixedPointSetIsTrivial},  \ref{FirstEquivarianceTheorem} and \ref{AbsenceOfL} are supplied in Section \ref{ProofsOfTheMainTwoPeriodicityObstructions}. 
\subsection{Background and homological periodicity obstructions} \label{ClassicalPeriodicityObstructions}
Let $K\subset S^3$ be a knot of period $q>1$ and let $f:S^3\to S^3$ be the orientation preserving diffeomorphism realizing $K$'s $q$-periodicity. 

For some $n\in \mathbb N$, let $\wp:Y\to S^3$ be an $n$-fold cyclic covering map of $S^3$ branched along $K$. It is easy to see that $f:S^3\to S^3$ lifts to an order $q$ diffeomorphism $F:Y\to Y$, which in turn induces a $q$-fold cyclic covering map $\Pi:Y\to \overline Y$ (with $\overline Y=Y/\langle F\rangle$) branched over $\bar \wp^{-1}(\overline B)$. The Commutative Diagram \ref{CommutativeDiagramOfCoveringSpaces} captures the descriptions from this paragraph. 
\begin{equation} \label{CommutativeDiagramOfCoveringSpaces}
\xymatrixcolsep{10pc}\xymatrixrowsep{5pc}
\xymatrix{
Y \ar@(u,l)_{F}   \ar[d]_{\tiny \begin{array}{c} \text{$n$-fold cyclic} \cr \text{cover, branched}\cr \text{along } K   \end{array}}^\wp \ar[r]^{{\tiny \begin{array}{c} \text{$q$-fold cyclic cover,} \cr \text{branched along } \bar \wp^{-1}(\overline B)  \end{array}}}_\Pi   & {\overline Y}   \ar[d]^{{\tiny \begin{array}{c} \text{$n$-fold cyclic} \cr \text{cover, branched}\cr \text{along } \overline K   \end{array}}}_{\bar \wp}\\
S^3 \ar@(d,l)^{f} \ar[r]_{{\tiny \begin{array}{c} \text{$q$-fold cyclic cover,} \cr \text{branched along } \overline B   \end{array}}}^\pi & S^3 
}
\end{equation}
With this in place, we turn to several known knot periodicity obstructions. 
\subsubsection{Edmonds' Genus Condition} \label{EdmondsGenusCondition}
In \cite{E} Edmonds showed that if a knot $K$ of genus $g$ has period $q$, then 
\begin{equation} \label{EdmondsCondition}
q\le 2g+1\ {\rm and\ if\ } q>g,\ {\rm then\ } q=g+1\ {\rm or\ } 2g+1. 
\end{equation}  
Furthermore, if the genus of the quotient knot is $\overline g$, then 
\begin{equation}\label{QuotientGenusBound}
g \ge q\overline g.
\end{equation}
Clearly, if $q>g,$ then $\overline g$ is forced to be 0, and therefore $\overline \Delta$ is 1.

If $n$ is an an even number,  the number of disks in a Seifert surface $F$, obtained using Seifert's algorithm on an $n$-crossing knot diagram, is greater than 2, as only the trivial knot has exactly one Seifert circuit and only the torus knots of type $(n,2)$ have exactly two Seifert circuits and $n$ in that case has to be odd.  The Euler characteristic of such an $F$ is $1-2g(F) \ge -n+3$ 
and the genus of the knot $g \le g(F)$.  It follows that (see~\cite[Lemma 4.5]{N1}) if $K$ is an $n$-crossing knot with $n$ even, then 
\begin{equation} \label{GenusCrossings}
g \le n/2 - 1. 
\end{equation} 
Combining,~\ref{EdmondsCondition} and~\ref{GenusCrossings}, we have $q < 2g+1 \le n-1.$
On the other hand, if $n$ is odd, the $T(2,n)$ torus knot has period n.  So we have, for an $n$-crossing knot 
\begin{equation} \label{PeriodCrossings}
q \le n,\ {\rm if\ }n\ {\rm is\ odd,\ and\ } q \le n-1,\ {\rm if\ }n\ {\rm is\ even.\ }
\end{equation}

These conditions substantially limit the periods a given knot may possess. 
\subsubsection{Murasugi's Alexander Polynomial Conditions} \label{MurasugiSection}
Note that the Alexander polynomial of a knot is a polynomial $\Delta$ with integer coefficients, such that 
\begin{equation} \label{AlexSymmetry}
\Delta(t) \, =\, t^{{\rm deg}(\Delta)}\Delta(t^{-1}).
\end{equation}

Let $K$ be a $q$-periodic knot with quotient knot $\overline K$ and let $\lambda = |\ell k (K,B)|$ be the absolute value of the linking number of $K$ with its axis $B$. Let $\Delta _K(t)$ and $\Delta _{\overline K}(t)$ be the Alexander polynomials of $K$ and $\overline K$ respectively.  The next two conditions constrain periodicity of $K$ with $\overline K$ as a quotient knot. 
\begin{equation}\label{Murasugi1}
\Delta_{\overline K}\, \vert\, \Delta_K \quad \text{ in }  \mathbb Z[t,t^{-1}].
\end{equation}
\begin{equation} \label{Murasugi2}
\Delta_K (t) \stackrel{\cdot}{\equiv} \left( \Delta_{\overline K}(t)\right)^q \left( 1+t+\dots +t^{\lambda -1}\right)^{q-1} \, (\text{mod } q).
\end{equation}
The symbol \lq\lq $\stackrel{\cdot}{\equiv}$\rq\rq \, stands for congruence modulo $q$ up to multiplication by units in $\mathbb Z[t,t^{-1}]$. Additionally, $\gcd(\lambda , q ) = 1$. The proofs of these can be found in \cite{M1,M2}. 
Conditions \eqref{Murasugi1} and \eqref{Murasugi2} are  called the {\em Murasugi Conditions}.  

The genus of any knot is necessarily greater than or equal to half of the degree of its Alexander polynomial.   Therefore, using  \eqref{QuotientGenusBound} we have
\begin{equation} \label{GenusAlexander}
g(K) \ge \frac{q}{2}{\rm deg}({\Delta}_{\overline K}).
\end{equation}
\subsubsection{The Homology Condition}  \label{SectionHomologyCondition}
Recall from Remark \ref{qln} that $q$ and $\ell$ denote distinct primes and $n$ is a prime-power. 
Let  
$g_q(\ell)$ be the smallest natural number such that  
\begin{equation}\label{DefinitionOfGql}
 \ell ^{g_q(\ell)} \equiv \pm 1 \,  (\text{mod } q).
\end{equation}
Let $Y$ and $\overline Y$ be $n$-fold cyclic covers of $S^3$ branched over a  $q$-periodic knot $K$ and its quotient knot $\overline K$ respectively. Then there exist non-negative integers  $s,b_1,\dots b_s$ such that, after identifying $H_1(\overline Y;\mathbb Z)_\ell $ with a subgroup of $H_1(Y;\mathbb Z)_\ell$ as allowed by Equation \eqref{FixedPointSetOfFStar}, we have the following isomorphism, proved in \cite{N3}, and henceforth referred to as the {\em Homology Condition}.  
\begin{equation} \label{StructureOfQuotientOfFirstHomologies2}
H_1(Y;\mathbb Z)_\ell / H_1(\overline Y;\mathbb Z)_\ell  \cong \mathbb Z_\ell ^{2b_1 g_q(\ell)} \oplus \mathbb Z_{\ell^2} ^{2b_2 g_q(\ell)} \oplus \dots \oplus \mathbb Z_{\ell^s} ^{2b_s g_q(\ell)}. \end{equation}

Prime-power fold cyclic covers $Y$ of $S^3$ branched over a knot are rational homology spheres, and the degree of the cover is relatively prime to the order of the first homology of $Y$. If the degree of the cover is odd, then $H_1(Y;\mathbb Z)$ is always a double (that is $H_1(Y;\mathbb Z)\cong G \oplus G$ for some $G$),  but in general this is not the case for even-fold covers. In Corollary \ref{2^n} below we observe that for a periodic knot we have a ``double'' in homology irrespective of the parity of the prime-power fold cover.

\begin{cor}\label{2^n}
Let $q$ and $\ell$ be two distinct primes and $K$ a $q$-periodic knot with $Y$ its $n$-fold cyclic branched cover (with $n$ a prime-power of arbitrary parity). Then 
\begin{equation}  \label{Known1}
H_1(Y;\mathbb Z)_{\ell}/H_1(\overline{Y};\mathbb Z)_{\ell} \mbox{ is a double.} 
\end{equation}

In particular if $Y$ is the double branched cover of $K$, $\Delta_{\overline K}(t)$ the Alexander polynomial of the  quotient knot, and $\ell \not\vert{\Delta_{\overline K}}(-1),$ then 
\begin{equation}\label{double}
H_1(Y;\mathbb Z)_{\ell} \mbox{ is a double.} 
\end{equation}
\end{cor}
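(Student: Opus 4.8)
The plan is to deduce both claims directly from the Homology Condition \eqref{StructureOfQuotientOfFirstHomologies2}, which is the substantive input cited from \cite{N3}. First I would observe that the right-hand side of \eqref{StructureOfQuotientOfFirstHomologies2} is visibly a double: each summand $\mathbb Z_{\ell^j}^{2b_j g_q(\ell)}$ has even exponent $2b_j g_q(\ell)$, so writing $G = \bigoplus_{j=1}^s \mathbb Z_{\ell^j}^{b_j g_q(\ell)}$ gives $H_1(Y;\mathbb Z)_\ell / H_1(\overline Y;\mathbb Z)_\ell \cong G \oplus G$. This immediately yields \eqref{Known1}, with no restriction on the parity of $n$; the point worth emphasizing is that the doubling here comes from the $q$-periodicity (via the transfer/eigenspace argument underlying \eqref{StructureOfQuotientOfFirstHomologies2}) rather than from the parity of the cover, which is the usual source of doubling for odd-fold covers.

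For the second claim I would specialize to $n=2$, so $Y$ is the double branched cover of $K$ and $\overline Y$ the double branched cover of the quotient knot $\overline K$. The standard fact is that $|H_1(Y;\mathbb Z)| = |\Delta_K(-1)|$ and $|H_1(\overline Y;\mathbb Z)| = |\Delta_{\overline K}(-1)|$, so the $\ell$-primary part $H_1(\overline Y;\mathbb Z)_\ell$ has order the $\ell$-part of $|\Delta_{\overline K}(-1)|$. The hypothesis $\ell \nmid \Delta_{\overline K}(-1)$ therefore forces $H_1(\overline Y;\mathbb Z)_\ell = 0$. Feeding this into \eqref{Known1} (equivalently \eqref{StructureOfQuotientOfFirstHomologies2}) collapses the quotient to $H_1(Y;\mathbb Z)_\ell$ itself, so $H_1(Y;\mathbb Z)_\ell \cong G \oplus G$ is a double, which is \eqref{double}.

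The only real point requiring care is the identification $|H_1(\overline Y;\mathbb Z)| = |\Delta_{\overline K}(-1)|$ and the resulting vanishing of $H_1(\overline Y;\mathbb Z)_\ell$ when $\ell \nmid \Delta_{\overline K}(-1)$; this is classical (the order of the first homology of the $2$-fold branched cover is the determinant of the knot), and once it is in hand everything else is a formal consequence of \eqref{StructureOfQuotientOfFirstHomologies2}. So I do not expect a genuine obstacle here — the corollary is essentially a repackaging of the Homology Condition — and the write-up should be short.
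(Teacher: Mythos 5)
Your proposal is correct and follows essentially the same route as the paper: \eqref{Known1} is read off from the even exponents in the Homology Condition \eqref{StructureOfQuotientOfFirstHomologies2}, and \eqref{double} follows by using $|H_1(\overline Y;\mathbb Z)|=|\Delta_{\overline K}(-1)|$ to conclude $H_1(\overline Y;\mathbb Z)_\ell=0$ so the quotient is $H_1(Y;\mathbb Z)_\ell$ itself. The paper's only additional remark is the (trivial) observation that the trivial group counts as a double, which your argument implicitly covers.
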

\begin{proof}
The claim \eqref{Known1} is a direct consequence of 
\eqref{StructureOfQuotientOfFirstHomologies2} as 
$\mathbb Z_{\ell^m} ^{2b g_q(\ell)} \cong \mathbb Z_{\ell^m} ^{b g_q(\ell)}\oplus \mathbb Z_{\ell^m} ^{b g_q(\ell)}$, while \eqref{double}  is implied by \eqref{Known1} along with the observation that if $\ell$ does not divide $\Delta_{\overline K}(-1)$, then $H_1(\overline Y;\mathbb Z)_\ell = 0$. With regards to \eqref{Known1} and \eqref{double}, we note that the trivial group is a double. 
\end{proof}

As mentioned in Section \ref{SubSectionOnLowCrossingKnots} we shall refer to the periodicity obstructions from Section \ref{EdmondsGenusCondition} (Edmonds' Genus Conditions), Section \ref{MurasugiSection} (Murasugi's Alexander Polynomail Conditions) and from the present section (the Homology Condition) collectively as the {\em homology periodicity obstructions}.  When in Subsection \ref{ExamplesOfPeriodicAndNonPeriodicKnots} we revisit Examples \ref{ExampleOf12a100} and \ref{AlgPer} from the introduction, we shall see that that the knots considered therein pass each of the obstructions \, \eqref{EdmondsCondition}, \eqref{Murasugi1}, \eqref{Murasugi2}, \eqref{GenusAlexander} and \eqref{StructureOfQuotientOfFirstHomologies2}, underscoring the strength of the Heegaard Floer obstruction. 
\subsection{Proofs of Theorems \ref{ObstructionToQPeriodicityWhenFixedPointSetIsTrivial}, \ref{FirstEquivarianceTheorem} and \ref{AbsenceOfL}} \label{ProofsOfTheMainTwoPeriodicityObstructions}
For the two proofs in this section, we rely on the following notation and assumptions: 
\vskip1mm
Let $K$ be a $q$-periodic knot whose periodicity is realized by an order $q$, orientation preserving diffeomorphism $f:S^3\to S^3$. For some prime-power $n$, let $Y$ be the $n$-fold cyclic cover of $S^3$ branched over $K$ and let $F:Y\to Y$ be the lift of $f$. Note that $Y$ is a rational homology $3$-sphere. Let $\overline Y$ be the $n$-fold cyclic cover of $S^3$ branched along the quotient knot $\overline K$ and let $\ell$ be a prime distinct from $q$. Recall also that we tacitly identity $Spin^c(Y)$ with $H_1(Y;\mathbb Z)$ through an $F$-compatible affine identification (see Remark \ref{FCompatibility}), and write $s$ or $F_*(s)$ with $s\in H_1(Y;\mathbb Z)$ instead of $\s$ or $F^*(\s)$ with $\s \in Spin^c(Y)$.
\subsubsection{Proof of Theorem \ref{ObstructionToQPeriodicityWhenFixedPointSetIsTrivial}} 
The hypothesis of Theorem \ref{ObstructionToQPeriodicityWhenFixedPointSetIsTrivial} states that $H_1(\overline Y;\mathbb Z)_\ell = 0$, showing that the fixed point set of $F_*$ restricted to $H_1(Y;\mathbb Z)_\ell$ is $\{0\}$ (as follows from \eqref{FixedPointSetOfFStar}). Thus for every non-zero element $s\in H_1(Y;\mathbb Z)_\ell$, the set  $\{F_*(s), F_*^2(s), \dots , F_*^q(s)\}$ contains $q$ distinct spin$^c$-structures. 
As $d(Y,F^{i_1}_*(s)) = d(Y,F^{i_2}_*(s))$ for any pair $i_1,i_2\in \{1,\dots,q\}$ (Theorem \ref{HeegaardFloerTheoremOnPullbackOfCorrectionTerms}), the claim of Theorem \ref{ObstructionToQPeriodicityWhenFixedPointSetIsTrivial}  follows. \hfill $\Box$ 
\subsubsection{Proof of Theorem \ref{FirstEquivarianceTheorem}} 
Define $H$ to be the subgroup of $H_1(Y;\mathbb Z)_\ell$ given by $H=\text{Fix}(F_*|_{H_1(Y;\mathbb Z)_\ell})$, and note that $H\cong H_1(\overline Y;\mathbb Z)_\ell$ according to \eqref{FixedPointSetOfFStar}. Consider the cosets $s+H$ of $H$ in $H_1(Y;\mathbb Z)_\ell$ and for any $s+h\in s+H\ne 0+H$, consider the associated orbit $\{s+h,F_*(s+h), F^2_*(s+h),\dots,F^{q-1}_*(s+h)\}$ of $s+h$ under the action of $\mathbb Z_q=\{\text{Id},F_*,\dots,F_*^{q-1}\}$. Each such orbit has $q$ elements since $q$ is prime (restricting the cardinality of said orbit to be either $1$ or $q$) and because an orbit of cardinality $1$ would lead to $s+h = F_*(s+h)$ and thus to $s+h\in H$, contrary to assumption. Accordingly, there are 
$$\frac{([H_1(Y;\mathbb Z):H]-1)\cdot |H|}{q}$$
values of correction terms (not necessarily all distinct) $d(Y,s)$ with $s\in H_1(Y;\mathbb Z)_\ell$, each of which occurs with multiplicity $q$. The remaining $|H|$ correction terms $d(Y,s)$ with $s\in H$, may have arbitrary multiplicity. Theorem \ref{FirstEquivarianceTheorem} now follows.  \hfill $\Box$ 
\subsubsection{Proof of Theorem \ref{AbsenceOfL}} \label{ProofOfProposition19}
Assume $K$ is a $q$-periodic knot with $q$ a prime, let $\overline K$ be its quotient knot and let $\Delta _K(t)$ and $\Delta _{\overline K}(t)$ be their respective Alexander polynomials. Let $Y$ and $\overline Y$ be the $n$-fold cyclic branched covers of $S^3$ with branching sets $K$ and $\overline K$ respectively, with $n$ a power of a prime. Let $F_{*,q} = F_*|_{H_1(Y;\mathbb Z)_q}$.  The proof of the theorem rests on the existence of a \lq\lq {\em transfer map}\rq\rq \, $\mu_*:H_1(\overline Y;\mathbb Z)_q \to H_1(Y;\mathbb Z)_q$ with the property that if $F_{*,q}(\alpha ) = \alpha$ for some $\alpha \in H_1(Y;\mathbb Z)_q$, then (see Diagram \ref{CommutativeDiagramOfCoveringSpaces} for a definition of $\Pi$) 
\begin{equation} \label{RelationForTransferMap}
\mu_* \circ \Pi_* (\alpha )  = q \cdot \alpha .
\end{equation}
The existence of $\mu_*$ and the validity of relation \eqref{RelationForTransferMap} follow from the results in Section III.2 of \cite{Bredon}, see specifically relation (2.2) in said section. 

Write $Fix(F_{*,q})$ to mean the fixed point set of $F_{*,q}:H_1(Y;\mathbb Z)_q \to H_1(Y;\mathbb Z)_q$, and consider the the commutative diagram
\begin{equation} \label{CommutativeDiagramWithTransferMap}
\xymatrixcolsep{10pc}\xymatrixrowsep{5pc}
\xymatrix{
 Fix(F_{*,q}) \ar[dr]_{\Pi_*}  \ar[rr]^{\cdot q}  &  & H_1(Y;\mathbb Z)_q   \\
&   H_1(\overline Y;\mathbb Z)_q  \ar[ur]_{\mu_*}&  
}
\end{equation}
If $Fix(F_{*,q})\cong \mathbb Z_q^{m_1}\oplus \dots \oplus \mathbb Z_{q^k}^{m_k}$, then image of the horizontal map in \eqref{CommutativeDiagramWithTransferMap} is isomorphic to $\mathbb Z_q^{m_2}\oplus \dots \oplus \mathbb Z_{q^{k-1}}^{m_k}$ and is contained in the image of $\mu_*$. Thus, 
$$
|Fix(F_{*,q})|  = q^{m_1+\dots +m_k}\cdot |Im(\cdot q) | \le q^{m_1+\dots +m_k}  \cdot |Im(\mu_*)| \le q^{m_1+\dots +m_k}\cdot |H_1(\overline Y;\mathbb Z)_q|,
$$
as claimed in Theorem  \ref{AbsenceOfL}. Corollary \ref{CorollaryForTheCaseOfQPrimaryGroups} is an easy consequence of Theorem \ref{AbsenceOfL}. 
\hfill $\Box$ 
\subsection{Examples \ref{ExampleOf12a100} and \ref{AlgPer} revisited} \label{ExamplesOfPeriodicAndNonPeriodicKnots}
This section shows that the knots in Examples \ref{ExampleOf12a100} and \ref{AlgPer} pass the homological  periodicity obstructions from Sections \ref{EdmondsGenusCondition} -- \ref{SectionHomologyCondition}, as claimed in the introduction. This underscores the usefulness of the periodicity obstruction coming from the Heegaard Floer correction terms.
\subsection{Calculations for Example \ref{ExampleOf12a100}}\label{Pf12a100}
Let $K$ be the knot $K=12a_{100}$. Its Alexander polynomial $\Delta_K(t)$ and the factorization of $\Delta_K(t)$ over $\mathbb Z$ into irreducible factors, is given by 
\begin{align} \label{12a100FirstMurasugiCondition}
\Delta_K(t) & = 3t^{6}-21t^{5}+ 53t^{4}-71t^3+ 53t^2-21t+ 3,\cr
                   & = (t^3-5t^2+6t-3)(3t^3-6t^{2}+5t-1).
\end{align} 
The Murasugi Condition \eqref{Murasugi2} for $\Delta_K(t)$ and with $q=3$, reads
\begin{align} \label{12a100SecondMurasugiCondition}
(1+t)^2 &  \stackrel{\cdot}{\equiv} (1+t+\dots+t^{\lambda -1})^2 \cdot (\Delta_{\overline K}(t))^3 \, \,(\text{mod } 3).
\end{align}
As by \eqref{AlexSymmetry} and \eqref{Murasugi1} $\Delta_{\overline K}$ is a symmetric factor of $\Delta_K$, 
the only possibilities for $\Delta_{\overline K}(t)$ are $1$ or $\Delta _K (t).$ Of these, the only one that fits condition \eqref{12a100SecondMurasugiCondition} is $\Delta_{\overline K}(t) = 1$. Thus $K=12a_{100}$ passes the Murasugi condition with $q=3$, $\Delta_{\overline{K}}(t) =1$ and $\lambda  = 2$ (note that, as required, $\gcd (\lambda , q)=1$).   

The first homology of the $2$-fold cyclic cover $Y$ of $S^3$ branched over $K$ is 
$$H_1(Y;\mathbb Z) \cong \mathbb Z_{5}\oplus \mathbb Z_{5}\oplus \mathbb Z_{9}.$$
Given this and given $q=3$, the only meaningful choice of $\ell$ is $5$. Since $g_3(5) = 1$ (see \eqref{DefinitionOfGql}), condition \eqref{StructureOfQuotientOfFirstHomologies2} gives us
 (keeping in mind that $\Delta_{\overline K}(t)=1$ implies $H_1(\overline Y;\mathbb Z) = 0$) 
\begin{align*}
H_1(Y;\mathbb Z)_5 & \cong \mathbb Z_5^{2b_1}\oplus \mathbb Z_{25}^{2b_2}\oplus \dots \oplus \mathbb Z_{5^t}^{2b_t},
\end{align*}
which is clearly satisfied with $t=1$, $a_1=1$ and $a_i=0$ for $i\ge 2$. Accordingly, $K=12a_{100}$ passes the Homology Condition.  
\subsection{Calculations for Example \ref{AlgPer}} \label{PfAlgPer} 
Let $K=7_4\#7_4\#9_2$ be as in Example \ref{AlgPer}. As already noted, the knots $K_1=7_4$ and $K_2=9_2$ have the same Alexander polynomial $\Delta_{K_i}(t) = 4t^2 -7t + 4$, so that the Alexander polynomial of $K$ is $\Delta _K(t) = (4t^2 -7t + 4)^3$.  As $4t^2 -7t + 4$ is irreducible over $\mathbb Z$, the only possibilities for the Alexander polynomial of a quotient knot $\overline K$ are $1$ or powers of $4t^2 -7t + 4.$ 
The Murasugi condition \eqref{Murasugi2} with $q=3$ becomes 
$$ (1+t)^6 \stackrel{\cdot}{\equiv} (1+t+\dots + t^{\lambda -1})^2\cdot (\Delta_{\overline K}(t))^3 \, (\mbox{mod }3),$$
which forces $\Delta_{\overline K}(t) = 4t^2 -7t + 4$ and $\lambda =1$. With theses choices, $K$ satisfies the Murasugi Conditions \eqref{Murasugi1} and \eqref{Murasugi2}. 

It is easily seen that $K$ also  meets the Homology Condition 
\eqref{StructureOfQuotientOfFirstHomologies2} since 
$$H_1(Y_i;\mathbb Z) \cong \mathbb Z_3\oplus \mathbb Z_5,$$
(where $Y_i$ is the $2$-fold cyclic cover of $S^3$ branched along $K_i$, $i=1,2$), rendering $H_1(Y;\mathbb Z)$ (with $Y$ the $2$-fold cyclic cover of $S^3$ branched along $K$) isomorphic to $H_1$ of the $2$-fold cyclic branched cover of $S^3$ branched along the $3$-periodic knot $K'=7_4\#7_4\#7_4$.  

We invite the reader to check that the knot $7_4\# 9_2\#9_2$ is also excluded from being 3-periodic by the Heegaard Floer correction terms (though it too passes the homological 3-periodicity obstructions). 
\section{Calculations for low-crossing knots} \label{SectionWithDetailsOnExamples}

This section supplies background on the massive computational effort on which the results from Section \ref{SubSectionOnLowCrossingKnots} are based. All our correction term calculations for alternating knots are based on an implementation of the Ozsv\'ath-Szab\'o algorithm from \cite{peter14} into Mathematica. Likewise, the three homological knot periodicity obstructions from Sections  \ref{EdmondsGenusCondition} -- \ref{SectionHomologyCondition}, have also been conducted with computations in Mathematica. We do not address non-alternating knots as at present there is no efficient algorithm for the computation of correction terms of their 2-fold cyclic branched covers. As in the introduction, by homological $q$-periodicity obstructions we mean the $q$-periodicity obstructions of Edmonds, Murasugi and the Homology obstruction, discussed in Sections \ref{EdmondsGenusCondition}, \ref{MurasugiSection} and \ref{SectionHomologyCondition} respectively.  

{\em Notational conventions. } For the remainder of this section, when analyzing potentially periodic knots, we denote them by $K, K_i, L,L_i$, we label their hypothetical quotient knots by $\overline K, \overline K_i, \overline L, \overline L_i$,  and their respective 2-fold cyclic branched covers by $Y, Y_i, X, X_i$ and $\overline Y, \overline Y_i, \overline X, \overline X_i$. For a knot $M$, $\Delta _M(t)$ denotes its Alexander polynomial, and the symbol $\stackrel{.}{\equiv}$ means congruence up to multiplication by $\pm t^n$, $n\in \mathbb Z$. 
\subsection{Alternating knots with up to eleven crossings}\label{11CrPeriodicEx} 
With the exception of the period 3 results for 11-crossing knots, all results from Sections \ref{SectionOn9CrossingKnots} -- \ref{SectionOn11CrossingKnots} have appeared elsewhere in the literature. We thus focus on our claimed obstruction for 3-periodicity for 11-crossing alternating knots.  Recall that the only 5 knots which pass the homological 3-periodicity obstruction are $11a_{43},\,  11a_{58},\,  11a_{165},\, 11a_{297}, \, 11a_{321}$. All but the first one is excluded from actually being 3-periodic by the Heegaard Floer correction terms. 

If $K_1$ is either $11a_{58}$ of $11a_{165}$, then $\Delta _{K_1}(t) = (t-2)(2t-1)(1-t+t^2)^2\stackrel{.}{\equiv} (1+t)^6 \,(\text{mod } 3)$. Accordingly, $K_1$ passes the Murasugi condition with hypothetical quotient knot $\overline K_1$ with either $\Delta_{\overline K_1}(t) = (t-2)(2t-1)$ or $\Delta_{\overline K_1}(t) = 1-t+t^2$. We find that $|H_1(\overline Y_1;\mathbb Z)|$ equals either 3 or 9. As $H_1(Y_1;\mathbb Z)\cong \mathbb Z_{81}$, Corollary \ref{CorollaryForTheCaseOfQPrimaryGroups} implies that there are at least 81-27 = 54 correction terms of $Y_1$ which occur with multiplicities divisible by 3. This is not the case for either choice of $K_1$, as evidenced by the table (while $11a_{58}$ and $11a_{165}$ share this table of multiplicities, they do not have the same correction terms overall).

\vskip2mm
\begin{center}
\begin{tabular}{|c|c||c|c|c|c|c|} \hline 
\multirow{2}{*}{$11a_{58}$ or $11a_{165}$}&$d(Y,s)$  & $-\frac{8}{9}$  & -$\frac{2}{9}$  &  $0$ & $\frac{4}{9}$ & All others  \cr \cline{2-7} 
&Multiplicity of $d(Y,s)$ & $6$ &  $6$ & $9$ & $6$ & $2$\cr \hline 
\end{tabular}
\end{center}
\vskip2mm

For $K_2=11a_{297}$ we obtain $\Delta_{K_2}(t) = (1-3t+t^2)^2(2-3t+2t^2) \stackrel{.}{\equiv} (1+t^2)^3 \,(\text{mod } 3)$, which passes the Murasugi polynomial condition with quotient knot $\overline K_2$ with Alexander polynomial $\Delta_{\overline K_2}(t) = 1-3t+t^2$ or $\Delta_{\overline K_2}(t) = 2-3t+2t^2$. As the first homology of $Y_2$ is given by $H_1(Y_2;\mathbb Z) \cong \mathbb Z_5\oplus \mathbb Z_5\oplus \mathbb Z_7$, the possibility  $\Delta_{\overline K_2}(t)=1-3t+t^2$ is excluded by the homology condition, leaving $\Delta_{\overline K_2}(t) = 2-3t+2t^2$. This leads to $H_1(Y_2;\mathbb Z)_5/H_1(\overline Y_2;\mathbb Z)_5 \cong \mathbb Z_5\oplus \mathbb Z_5$, and thus Theorem \ref{ObstructionToQPeriodicityWhenFixedPointSetIsTrivial} with $\ell = 5$ implies the existence of 24 correction terms $d(Y_2,s)$ with multiplicities divisible by 3, a condition that is violated. 
\vskip2mm
\begin{center}
\begin{tabular}{|c|c||c|c|c|c|} \hline 
\multirow{2}{*}{$11a_{297}$}&$d(Y,s)$  & $-\frac{13}{10}$, \, $\frac{11}{10}$  & -$\frac{9}{10}$, \, $\frac{7}{10}$  &  $-\frac{1}{2}$ & $-\frac{1}{10}$,\, $\frac{3}{10}$   \cr \cline{2-6} 
&Multiplicity of $d(Y,s)$ & $2$ &  $4$ & $1$ & $6$ \cr \hline 
\end{tabular}
\end{center}
\vskip2mm

For $K_3=11a_{321}$ we obtain $\Delta_{K_3}(t) = 3-15t+27t^2-31t^3+27t^4-15t^5+3t^6\stackrel{.}{\equiv} 1 \, (\text{mod } 3)$, passing Murasugi's condition with quotient knot $\overline K_3$ with $\Delta _{\overline K_3}(t) = \Delta_{K_3}(t)$ or $\Delta _{\overline K_3}(t) =1$. The former possibility is excluded by Edmonds' condition, forcing $\Delta _{\overline K_3}(t) =1$. $Y_3$ has first homology $H_1(Y_3;\mathbb Z) \cong \mathbb Z_{11}\oplus \mathbb Z_{11}$, so that Theorem \ref{ObstructionToQPeriodicityWhenFixedPointSetIsTrivial} with $\ell = 11$, implies the existence of 120 correction terms $d(Y_3,s)$ with multiplicities divisible by 3. This is not the case: Each of the values $\frac{\lambda }{11}$ with $\lambda \in \{-7, \pm 5, \pm 3, \pm 1\}$ equals a correction term  of $Y_3$ with multiplicity 12, each of $\frac{-9}{11}$ and $\frac{7}{11}$ is the value of a correction term with  multiplicity 10, $\frac{9}{11}$ comes with multiplicity 8, and the remaining correction terms have multiplicities 1, 2 and 4. Thus, there are at most 102 correction terms with multiplicities divisible by 3, preventing 3-periodicity of $11a_{321}$. 

There are no 11-crossing knots that pass the homological $q$-periodicity obstructions for a prime $q>3$, save $11a_{367}$ which, being the $(11,2)$-torus knot, passes the 11-periodicity obstructions. 
\subsection{Twelve crossing alternating knots}\label{12CrPeriodicEx} 
It follows from~\ref{GenusCrossings} that the largest possible genus of any knot with 12 crossings is $g=5$ and from~\ref{PeriodCrossings}  that the largest possible period is 
$q=11$.  Thus, we proceed by checking which 12-crossing alternating knots pass the homological $q$-periodicity obstructions for $q=3,5,7,11$. 

There are 17 knots that pass the homological 3-periodicity obstructions, namely the six 3-periodic knots $12a_{503},\, 12a_{561},\, 12a_{615},\, 12a_{1019},\, 12a_{1022},\, 12a_{1202}$ from Figure \ref{Period3Knots}, and the eleven knots listed below:
\begin{equation} \label{SpecialKnotsNeedingExtraTreatment}
\begin{array}{l}
 12a_{100},\, 12a_{348},\, 12a_{376},\, 12a_{1206}, \cr
 12a_{390},\, 12a_{425},\, 12a_{459},\, 12a_{596}\, 12a_{672}, \cr
 12a_{634}, \, 12a_{780}.
\end{array}
\end{equation}
Of the knots in the first row of \eqref{SpecialKnotsNeedingExtraTreatment}, $K_1=12a_{100}$ was already shown to not have period $3$ in Example \ref{ExampleOf12a100} using correction terms. Using the same method, we shall demonstrate next that the other three knots in this row also cannot have period $3$. 

The Alexander polynomial of knot $K_2=12a_{348}$ is 
\begin{align}  \label{FactorsOf12a348}
\Delta_{K_2}(t)  & =  2-17t+ 54t^{2}-79t^3+ 54t^4-17t^5+ 2t^6,  \cr
& = (t-2) (2t-1) (t^2 - 3t + 1)^2,
\end{align}
and its mod 3 reduction is 
\begin{align}
\Delta_{K_2}(t) \stackrel{\cdot}{\equiv}  (1+t+\dots+t^{3})^2\, \, (\mbox{mod } 3). 
\end{align}
This with \eqref{Murasugi2} forces  $\lambda = 4$ and $\Delta_{\overline K_2}(t) \stackrel{\cdot}{\equiv} 1$. Examining the factors of $\Delta_{K_2}(t)$ in \eqref{FactorsOf12a348}, we find that $\Delta_{\overline K_2}(t) = 1$. Accordingly, the $H_1(\overline Y_2;\mathbb Z)=0$. Choosing $\ell = 5$, Theorem \ref{ObstructionToQPeriodicityWhenFixedPointSetIsTrivial} shows that all correction terms $d(Y_2,s)$ with $s\in H_1(Y_2;\mathbb Z)_5-\{ 0\}$ must come with multiplicities divisible by $3$. However, the correction terms and their multiplicities in the next table show that this is not the case.  
\vskip2mm
\begin{center}
\begin{tabular}{|c|c||c|c|c|c|c|c|c|c|c|} \hline 
\multirow{2}{*}{$12a_{348}$}&$d(Y_2,s)$  & $-\frac{94}{45}$  & $-\frac{58}{45}$  &  $-\frac{8}{9}$ & $-\frac{28}{45}$ & $-\frac{16}{45}$ & $-\frac{2}{9}$  &$\frac{2}{45}$  & $\frac{14}{45}$ & $\frac{4}{9}$ \cr \cline{2-11} 
&Multiplicity of $d(Y_2,s)$ & $1$ &  $2$ & $4$ & $2$ & $2$ & $2$ & $2$ & $6$ & $4$ \cr \hline 
\end{tabular}
\end{center}
\vskip2mm

Turning to $K_3=12a_{376}$, we note that  $H_1(Y_3;\mathbb Z) \cong \mathbb Z_{27}\oplus \mathbb Z_5$ while its quotient knot $\overline K_3$, if it exists, has $H_1(\overline Y_3;\mathbb Z)\cong \mathbb Z_{15} \cong \mathbb Z_{3}\oplus \mathbb Z_5$. Envoking Corollary \ref{CorollaryForTheCaseOfQPrimaryGroups} with $q=3$, we see that $|Fix(F_{*,3})|\le 9$, showing that there are at least 18 correction terms $d(Y,s)$, $s\in H_1(Y_3;\mathbb Z)_3\cong \mathbb Z_{27}$ that come with a multiplicity divisible by 3. An explicit computation shows that this is not the case: 
\vskip2mm
\begin{center}
\begin{tabular}{|c|c||c|c|c|} \hline 
\multirow{2}{*}{$12a_{376}$} & 
$d(Y_3,s)$  & 
$\frac{1}{2}$  & 
$-\frac{1}{6}$  & 
All others  \cr \cline{2-5} 
 & 
 Multiplicity of $d(Y_3,s)$ & 
 $3$ &  
 $6$ &  
 $2$ \cr \hline 
\end{tabular}
\end{center}
\vskip2mm

The knot $K_4=12a_{1206}$ from \eqref{SpecialKnotsNeedingExtraTreatment} has  $H_1(Y_4;\mathbb Z) \cong \mathbb Z_7\oplus \mathbb Z_{35}$ and if there is a quotient knot $\overline K_4$, it has $H_1(\overline Y_4;\mathbb Z) \cong \mathbb Z_5$. This allows for an application of Theorem \ref{FirstEquivarianceTheorem} with the choice of $\ell = 7$, forcing the $48$ correction terms $d(Y_4,s)$ of $Y_4$ corresponding to $s\in H_1(Y_4;\mathbb Z)_7-\{0\}$ to come with values with multiplicities divisible by $3$. An explicit computation shows this does not happen, precluding $12a_{1206}$ from being $3$-periodic:
\vskip2mm
\begin{center}
\begin{tabular}{|c|c||c|c|c|c|c|c|c|c|c|c|} \hline 
\multirow{2}{*}{$12a_{1206}$}&$d(Y_4,s)$  & $-\frac{9}{7}$  & $-1$  &  $-\frac{5}{7}$ & $-\frac{3}{7}$ & $-\frac{1}{7}$ & $\frac{1}{7}$  &$\frac{3}{7}$  & $\frac{5}{7}$ & $1$ & $\frac{9}{7}$ \cr \cline{2-12} 
&Multiplicity of $d(Y_4,s)$ & $2$ &  $4$ & $4$ & $6$ & $6$ & $6$ & $6$ & $4$ & $8$ & $2$ \cr \hline 
\end{tabular}
\end{center}

Somewhat similar to the case of $12a_{376}$ above, the knots in row two of \eqref{SpecialKnotsNeedingExtraTreatment} have two-fold covers $Y$ with $H_1(Y;\mathbb Z) \cong \mathbb Z_3^k$ with $k=4,5$. However, their corresponding hypothesized quotient knots $\overline K$ have two-fold covers $\overline Y$ with $|H_1(\overline Y;\mathbb Z)| = 3,9$, and thus by Corollary \ref{CorollaryForTheCaseOfQPrimaryGroups}, the action of $F_*$ on $H_1(Y;\mathbb Z)$ cannot be the identity. It thus must equal $F_* = 28 \cdot \text{id}$ or $F_*=55 \cdot \text{id}$ (if $k=4$) and $F_*=82\cdot \text{id}$ or $F_*=162\cdot \text{id}$ (if $k=5$). The fixed point set of $F_*$ has cardinality $27$ (if $k=4$) or $81$ (if $k=5$), showing that $81-27 = 54$ (if $k=4$) or $243-81 = 162$ (if $k=5$) of the correction terms of $Y$ must come with values that have multiplicity a multiple of $3$. An explicit computation shows that this is not the case for any of these knots. For example, for $12a_{425}$, we have $H_1(Y;\mathbb Z) = {\mathbb Z}_{3^4},\, H_1(\overline Y;\mathbb Z) =0,$ and the correction terms  and their multiplicities are given by 
\vskip2mm
\begin{center}
\begin{tabular}{|c|c||c|c|c|c|c|} \hline 
\multirow{2}{*}{$12a_{425}$}&$d(Y,s)$  & $-\frac{8}{9}$  & $\frac{4}{9}$  &  $-\frac{2}{9}$ & $0$ & All others  \cr \cline{2-7} 
&Multiplicity of $d(Y,s)$ & $4$ &  $4$ & $6$ & $9$ & $2$\cr \hline 
\end{tabular}
\end{center}
\vskip2mm

Finally, the two knots in the last row, if $3$-periodic, are forced by the Murasugi Conditions to have  Alexander polynomials for quotient knots equal to $4-7t+4t^{2}$
 (in the case of $12a_{634}$) and $1-t+t^{2}$ (in the case of $12a_{780}$).
 The knot $K=12a_{634}$ has $2$-fold cyclic branched cover $Y$ with $H_1(Y;\mathbb Z) \cong \mathbb Z_{3}\oplus \mathbb Z_{9}\oplus  \mathbb Z_5$ and quotient knot $\overline K$ with $2$-fold cyclic branched cover $\overline Y$ with $H_1(\overline Y;\mathbb Z) \cong \mathbb Z_3\oplus \mathbb Z_5$. 
Thus the only sensible choice of $\ell$ in Theorem \ref{FirstEquivarianceTheorem} is that of $\ell = 5$, rendering the said theorem ineffective as $H_1(Y;\mathbb Z)_5 \cong H_1(\overline Y;\mathbb Z)_5$. On the other hand, condition \eqref{RelationForTransferMap} (as in the proof of 
Theorem \ref{AbsenceOfL}) can be satisfied for certain choices of maps $\Pi_*$ and $\mu_*$, not allowing us to preclude the equality $F_*=$id. Accordingly, the correction term methods cannot be brought to bear on this knot. 

The knot $K=12a_{780}$ has two-fold cover $Y$ with $H_1(Y;\mathbb Z)\cong \mathbb Z_5\oplus \mathbb Z_{5} \oplus \mathbb Z_{9}$ and quotient knot $\overline K$, if it exists, with two-fold cover $\overline Y$ with $H_1(\overline Y;\mathbb Z) \cong \mathbb Z_3$. Thus Theorem \ref{FirstEquivarianceTheorem} with $\ell=5$ applies here, but alas all the correction terms corresponding to $\mathbb Z_5\oplus \mathbb Z_5-\{0\} = H_1(Y;\mathbb Z)_5-\{0\}$ do come with values that all have multiplicity $3$. 
Theorem \ref{AbsenceOfL} does not apply to the 3-torsion here. This makes $K=12a_{780}$ the only knot among twelve crossing alternating knots that passes the Murasugi Conditions, the Homology Condition and the correction terms condition, but the stronger factorization conditions over cyclotomic integers obtained by Davis-Livingston in \cite{D--L} show that this knot is not $3$-periodic.  (The knot $12a_{634}$ satisfies Davis-Livingston conditions.)

Lastly, there are no 12-crossing alternating knots that pass the homological $q$-periodicity obstructions for $q=5,7,11$. 
\subsection{Thirteen crossing alternating knots}
The largest genus of any 13-crossing alternating knot is 6 (realized only by the $(13,2)$-torus knot $13a_{4878}$). Edmonds' Conditions then show that the only possible odd prime periods $q$ of this family of knots, are $q=3,5,7,11,13$. 

There are 29 alternating 13-crossing knots that pass the homological 3-periodicity obstructions, but only three that pass it for period 5, namely
$$13a_{2142}, \, 13a_{2907}, \, 13a_{3010}.$$ 
All three of these are excluded from actually being 5-periodic by the Heegaard Floer correction terms. 

The knot $K_1=13a_{242}$ has Alexander polynomial $\Delta_{K_1}(t) = (1-t+t^2)^5$, and passes that Murasugi condition with hypothetical quotient knot $\overline K_1$ with $\Delta_{\overline K_1}(t) = 1-t+t^2$ and $\lambda =1$. An easy computation finds that  $H_1(Y_1;\mathbb Z) \cong \mathbb Z_9\oplus \mathbb Z_{27}$ and $H_1(\overline Y_1;\mathbb Z)\cong \mathbb Z_3$. The only way the Homology Condition is then satisfied is if $H_1(Y_1;\mathbb Z)_3/H_1(\overline Y_1;\mathbb Z)_3\cong \mathbb Z_9\oplus \mathbb Z_9$, in which case there need to be 80 correction terms that come with multiplicites divisible by 5. This is not the case as the next table shows:
\vskip2mm
\begin{center}
\begin{tabular}{|c|c||c|c|c|c|c|c|c|} \hline 
\multirow{2}{*}{$13a_{2142}$}&$d(Y_1,s)$  & $-\frac{29}{18}$  & $-\frac{17}{18}$  &  $-\frac{1}{2}$ & $-\frac{5}{18}$ & $\frac{1}{6}$ & $\frac{7}{18}$  &$\frac{19}{18}$    \cr \cline{2-9} 
&Multiplicity of $d(Y_1,s)$ & $6$ &  $14$ & $9$ & $18$ & $18$ & $12$ & $4$  \cr \hline 
\end{tabular}
\end{center}

The knot $K_2=13a_{2907}$ has Alexander polynomial $\Delta_{K_2}(t) = (2t-3)(3t-2)(1-3t+t^2) \equiv (1+t)^4 \, (\text{mod } 5)$, which passes the Murasugi condition with hypothetical quotient knot $\overline K_2$ with $\Delta_{\overline K_2}(t)=1$ and $\lambda =2$. As $H_1(Y_2;\mathbb Z) \cong \mathbb Z_{125}$ and $H_1(\overline Y_2;\mathbb Z)\cong 0$, Corollary \ref{CorollaryForTheCaseOfQPrimaryGroups}  then ensures that $|Fix(F_*)|\le 5$, forcing at least 120 correction terms of $Y_2$ to come with multiplicities divisible by 5. While $Y_2$ has 53 distinct values for its correction terms, most of them come with multiplicity 2, with a small number of exceptions:
\vskip2mm
\begin{center}
\begin{tabular}{|c|c||c|c|c|c|} \hline 
\multirow{2}{*}{$13a_{2907}$ and $13a_{3010}$}&$d(Y_2,s)$  & $-\frac{2}{5}$  & $0$  &  $\frac{2}{5}$ & All others    \cr \cline{2-6} 
&Multiplicity of $d(Y_2,s)$ & $10$ &  $5$ & $10$ & $2$  \cr \hline 
\end{tabular}
\end{center}

The knot $K_3=13a_{3010}$ has Alexander polynomial $\Delta_{K_3}(t) = (1-3t+t^2)(1-t+t^-t^3+t^4)^2 \equiv (1+t)^{10} \, (\text{mod } 5)$, and passes the Murasugi condition with hypothetical quotient knot $\overline K_3$ with $\Delta _{\overline K_3}(t) =1-3t+t^2$ and $\lambda =1$. It is easy to compute $H_1(Y_3;\mathbb Z) \cong \mathbb Z_{125}$ and $H_1(\overline Y_3;\mathbb Z)\cong \mathbb Z_5$, and so Corolllary \ref{CorollaryForTheCaseOfQPrimaryGroups}  implies that $|Fix(F_*)|\le 25$ showing that at least 100 correction terms must come with multiplicities divisible by 5. Indeed, the correction terms of $Y_3$ fit the same table of multiplicities as the correction terms for $Y_2$, and so $13a_{3010}$ is not 5-periodic.  We note that while the correction terms for $Y_2$ and $Y_3$ follow identical multiplicity patterns, the correction terms for $Y_2$ do not equal those for $Y_3$. For instance, $-\frac{212}{125}$ is the value of a correction term for $Y_2$ that does not occur among the correction terms for $Y_3$.  

No 13-crossing alternating knots passes the homological $q$-periodicity obstructions for $q=7,11$, while the only knot that passes the homological 13-periodicity obstructions is $13a_{4878}$, the $(13,2)$-torus knot. 
\subsection{Fourteen crossing alternating knots}
The maximum genus among 14-crossing alternating knots is $6$, showing that the only possible odd prime periods that may occur for these knots are $q=3,5,7,11,13$. 

There are 49 knots that pass the homological 3-periodicity obstructions, but only 2 knots pass the homological 5-periodicity obstructions, namely  
$$ 14a_{11685},\, 14a_{14294}. $$
Both of these knots are excluded from having period 5 by the Heegaard Floer correction terms. Indeed, the knot $K_1=14a_{11685}$ has Alexander polynomial $\Delta _{K_1}(t) = (2t-3)(3t-2)(1-3t+t^2) \equiv (1+t)^4 \,(\text{mod }5)$, passing the Murasugi condition with possible quotient knot $\overline K_1$ with Alexander polynomial $\Delta_{\overline K_1}(t) = 1$ and $\lambda =2$. One easily obtains $H_1(Y_1;\mathbb Z)\cong \mathbb Z_{125}$ and $H_1(\overline Y_1;\mathbb Z)\cong 0$, so that Corollary \ref{CorollaryForTheCaseOfQPrimaryGroups} implies that $Y_1$ has at least 120 correction terms with multiplicity divisible by 5. However, this does not occur.
\vskip2mm
\begin{center}
\begin{tabular}{|c|c||c|c|c|c|c|} \hline 
\multirow{2}{*}{$14a_{11685}$}&$d(Y_1,s)$  & $-\frac{8}{5}$  & $-\frac{2}{5}$  &  $0$ & $\frac{2}{5}$ & All others    \cr \cline{2-7} 
&Multiplicity of $d(Y_1,s)$ & $4$ &  $10$ & $3$ & $6$ & $2$  \cr \hline 
\end{tabular}
\end{center}

The knot $K_2=14a_{14294}$ has Alexander polynomial $\Delta_{K_2}(t) = (1-3t+t^2)(10-31t+43t^2-31t^3+10t^4) \stackrel{.}{\equiv} (1+t)^4 \, (\text{mod } 5)$ and so, as with $K_1$, it passes Murasugi's condition with possible quotient knot $\overline K_2$ with $\Delta _{\overline K_2}(t) = 1$ and $\lambda =2$. Since  $H_1(Y_2;\mathbb Z)\cong \mathbb Z_5\oplus \mathbb Z_{125}$ and $H_1(\overline Y_2;\mathbb Z)\cong 0$, Theorem \ref{AbsenceOfL} guarantees that there are at least $600$ correction terms with multiplicities divisible by 5. This knot comes close to having such correction terms. Indeed, each of the values $\frac{\lambda }{125}$ with 
\begin{align*}
\lambda & \in \{ -99, -91, -89, -79, -71, -69, -61, -59, -51, -49, -41, -39, -31, -29,  \cr
& \quad \quad \quad \quad \quad  -21, -19, -11, -9, -1, 1, 9, 11, 19, 21, 29, 31, 41, 49, 51, 59, 81, 89  \},
\end{align*}
represents the value of a correction term of $Y_2$ with multiplicity 10 (for a total of 320 correction terms).  Each of $-\frac{3}{5}$, $\pm \frac{1}{5}$ is the value of a correction term of $Y_2$ of multiplicity 30 (adding another 90 correction terms).  Moreover, $-\frac{7}{5}$ has multiplicity 12 and $\frac{3}{5}$ has multiplicity 18, each possibly contributing correction terms with multiplicity 10 (for an additional 20 correction terms). All other correction terms have multiplicities 2, 3, 4, 6 and 8. Overall, there are 430, out of 600 needed, correction terms with multiplicities divisible by 5. Thus $14a_{14294}$ is not $5$-periodic. 

There are 7 alternating 14-crossing knots that pass the homological 7-periodicity obstructions: 
\begin{equation} \label{14crossing7periodic}
14a_{3023}, \, 14a_{6681}, \,  14a_{7378}, \,  14a_{12332}, \, 14a_{12702}, \, 14a_{15044}, \,   \boxed{14a_{19470}}. 
\end{equation}
Of these, the knot $14a_{19470}$ is 7-periodic (cf. Figure \ref{knot14a19470}) while the remaining knots are excluded from being 7-periodic by the Heegaard Floer correction terms. Letting $L_1, \dots, L_6$ denote the six non-framed knots in \eqref{14crossing7periodic}, we find that $\Delta_{L_i}(t) = (2-3t+2t^2)^3 \equiv (1+t)^6 \, (\text{mod } 7)$ for each $i=1,\dots, 6$, forcing the hypothetical quotient knot $\overline L_i$ to have trivial Alexander polynomial (and $\lambda _i=2$) in order to pass the Murasugi condition. One finds that $H_1(X_i;\mathbb Z)\cong \mathbb Z_{343}$ for each $i=1,\dots, 6$, and so  Corollary \ref{CorollaryForTheCaseOfQPrimaryGroups} implies the existence of at least 336 correction terms with multiplicity divisible by 7, a condition that is violated by each of the six knots. 

No 14-crossing alternating knot passes the homological $q$-periodicity obstructions for $q=11,13$.
\subsection{Fifteen crossing alternating knots}
The largest genus of any 15-crossing alternating knot is 7 (attained uniquely by the $(15,2)$-torus knot $15a_{85263}$), and accordingly the only possible odd prime periods for these knots are $q=3,5,7,11,13$. 

There are 133 knots that pass the homological 3-periodicity obstructions. The only 11 knots that may have period 5 are 
$$
\begin{array}{c}
15a_{23599},\, 15a_{23902},\, 15a_{40549},\,  15a_{53966}, \, \boxed{15a_{64035}}, \, 15a_{69121}, \,
15a_{76651},\, 15a_{80526},\cr \boxed{15a_{84903}},\, \boxed{15a_{85262}},\, \boxed{15a_{85263}}. 
\end{array}
$$
The four boxed knots are 5-periodic. Indeed, $15a_{85263}$ is the $(15,2)$-torus knot, the knot $15a_{85262}$ is the 5-stranded pretzel knot $P(-3,-3,-3,-3,-3)$, and 5-periodic diagrams for $15a_{64035}$ and $15a_{84903}$ are given in Figure \ref{15crossingPeriodicKnots}. Each of the remaining 7 knots is excluded from being 5-periodic by the Heegaard Floer correction terms. 

Let $K_1$ be either of the knots $15a_{23599}$ or $15a_{23902}$, then $\Delta _{K_1}(t) = (1-3t+t^2)^3(1-t+t^2-t^3+t^4) \equiv (1+t)^{10} \, (\text{mod } 5)$. This Alexander polynomial passes the Murasugi test with quotient knot $\overline K_1$ with $\Delta _{\overline K_1}(t) =1-3t+t^2$ and $\lambda = 1$. We find $H_1(Y_1;\mathbb Z) \cong \mathbb Z_5\oplus \mathbb Z_{125}$ and $H_1(\overline Y_1;\mathbb Z)\cong \mathbb Z_5$. Theorem \ref{AbsenceOfL} then implies the existence of at least 600 correction terms with multiplicities divisible by 5. However, $15a_{23599}$ only has 340 such correction terms, and $15a_{23902}$ only 320.  

Let $K_2=15a_{40549}$, then $\Delta _{K_2}(t) = (1-2t+4t^2-3t^3+t^4)(1-3t+4t^2-2t^3+t^4)(1-t+t^2-t^3+t^4) \equiv (1+t+t^2+t^3)^4 \, (\text{mod } 5)$, which passes the Murasugi condition with $\Delta _{\overline K_2}(t) = 1$ and $\lambda =4$. As $H_1(Y_2;\mathbb Z) \cong \mathbb Z_5\oplus \mathbb Z_{11}\oplus \mathbb Z_{11}$ and $H_1(\overline Y_2;\mathbb Z)\cong 0$,  Theorem \ref{ObstructionToQPeriodicityWhenFixedPointSetIsTrivial} with $\ell =11$ shows that each correction term $d(Y_2,s)$, $s\in H_1(Y_2;\mathbb Z)_{11}-\{0\}$ has to have a multiplicity divisibly by 5 (for a total of 120 such correction terms). However, only 70 such correction terms exist, precluding 5-periodicity of $15a_{40549}$. 

Let $K_3$ be any of the knots $15a_{53966}$, $15a_{69121}$, or $15a_{80526}$, then $\Delta_{K_3}(t) = (2-3t+2t^2)(1-3t+3t^2-3t^3+t^4)^2 \equiv [2(1+t+t^2)]^5 \, (\text{mod } 5)$. This polynomial passes the Murasugi condition with $\Delta_{\overline K_3}(t) = 2-3t+2t^2$ and $\lambda = 3$. The first homology groups of $Y_3$ and $\overline Y_3$ are $H_1(Y_3;\mathbb Z) \cong \mathbb Z_{7}\oplus \mathbb Z_{11} \oplus \mathbb Z_{11}$ and $H_1(\overline Y_3;\mathbb Z)\cong \mathbb Z_7$. As with the previous example, Theorem \ref{ObstructionToQPeriodicityWhenFixedPointSetIsTrivial} with $\ell = 11$ guarantees the existence of 120 correction terms $d(Y_3,s)$, $s\in H_1(Y_3;\mathbb Z)_{11}-\{0\}$, each with a multiplicity that is divisible by 5. This condition is not realized for any of the three possible knots in this paragraph. 

Finally, consider $K_4 = 15a_{76651}$, then $\Delta _{K_4}(t) = 10-61t+141t^2-181t^3+141t^4-61t^5+10t^6 \equiv (1+t)^4 \, (\text{mod } 5 )$ which passes Murasugi's condition with $\Delta _{\overline K_4}(t) = 1$ and $\lambda = 2$. The first homologies of $Y_4$ and $\overline Y_4$ are $H_1(Y_4;\mathbb Z)\cong \mathbb Z_5 \oplus \mathbb Z_{11} \oplus \mathbb Z_{11}$ and $H_1(\overline Y_4;\mathbb Z)\cong 0$. Theorem \ref{ObstructionToQPeriodicityWhenFixedPointSetIsTrivial} with $\ell = 11$ provides for the existence of 120 correction terms of $Y_4$ with multiplicity divisibly by 5. However, there are only 90 such correction terms. 
\vskip2mm

Only a single 15-crossing alternating knot passes the homological 7-periodicity obstruction, namely 
$$ 15a_{23046}.$$
This knot $L$ has Alexander polynomial $\Delta_L(t) = (2-3t+2t^2)^3 \equiv (1+t)^6\, (\text{mod } 7)$, passing the Murasugi condition with $\Delta _{\overline L}(t) = 1$ and $\lambda =2$. Its 2-fold branched cover $X$ has first homology $H_1(X;\mathbb Z)\cong \mathbb Z_{343}$, and thus Corollary \ref{CorollaryForTheCaseOfQPrimaryGroups} implies the existence of at least 336 correction terms of $X$ with multiplicities divisible by 7. A direct calculation shows that there are only 14 such correction terms. 
\vskip2mm

Finally, no 15-crossing alternating knot passes the homological $q$-periodicity obstruction for $q=11, 13$, completing the proof of Theorem \ref{131415}. 

\bigskip
\footnotesize
\noindent\textit{Acknowledgments.}
We thank Chuck Livingston for sharing his Maple code for computing twisted Alexander polynomials. We would like to thank the two anonymous Referees for providing useful comments and suggestions, and specifically for suggesting Question \ref{QuestionAboutHyperbolicStructure}. 

S. Jabuka was partially supported by grant \#246123 from the Simons Foundation. S.~Naik was partially supported by grant number DMS-0709625 from the National Science Foundation.


\begin{thebibliography}{SK}

\normalsize
\baselineskip=17pt


\bibitem[AHW]{AHW} Adams, C., Hildebrand, M., Weeks, J.: Hyperbolic
invariants of knots and links. Trans. AMS \textbf{326}, 1--56 (1991) .

\bibitem[BM]{MB} Bass, H., Morgan, J. W.: The Smith Conjecture. Papers presented at the symposium held at Columbia University, New York, 1979. Edited by John W. Morgan and Hyman Bass. Pure and Applied Mathematics \textbf{112}. Academic Press, Inc., Orlando, FL, 1984. xv+243 pp. ISBN: 0-12-506980-4 

\bibitem[B]{Bredon} Bredon, G.: Introduction to compact transformation groups. Pure and Applied Mathematics, Vol. \textbf{46}. Academic Press, New York-London, 1972. xiii+459 pp.

\bibitem[BZ]{BZ} 
Burde, G., and Zieschang, H.: Knots, second~ed., vol.~5 of {\em de Gruyter Studies in Mathematics}, Walter de Gruyter \& Co., Berlin, 2003.

\bibitem[C1]{Nafaa1}  Chbili, N.: Strong periodicity of links and the coefficients of the Conway
 polynomial.  Proc. Amer. Math. Soc.  \textbf{136},  no. 6, 2217--2224 (2008).
 
 \bibitem[C2]{Nafaa2} Chbili, N.: Le polyn\^ome de Homfly des nÏuds librement
 pŽriodiques. C. R. Acad. Sci. Paris S\'er. I Math.  {\bf 325},  no. 4, 411--414 (1997).

\bibitem[C3]{Nafaa3}  Chbili, N.: Equivalent Khovanov homology associated with symmetric links. Kobe J. Math.  \textbf{27},  no. 1-2, 73--89 (2010).

\bibitem[CG1]{CG1} Casson, A., Gordon, C. McA.: Cobordism of Classical Knots. Preprint, Orsay (1975). (Reprinted in: A la Recherche de la Topologie Perdue, eds. A. Martin, L. Guillou, Progress in Mathematics 62, Birkhauser, 1986.)

\bibitem[CG2]{CG2} Casson, A., Gordon, C. McA.: On Slice Knots in Dimension
3. Proc. Sympos. Pure Math. \textbf{32}, Part 2, A. M. S., Providence,
R.I., 39--53 (1978).

\bibitem[CK]{CK} Cha, J. C., Ko, K. H.: On equivariant slice knots, Proc. Amer. Math. Soc. \textbf{127}  2175--2182, (1999).

\bibitem[CL]{knotinfo}
Cha, J. C., Livingston, C.: Knotinfo: Table of knot invariants, http://www.indiana.edu/~knotinfo.

\bibitem[C]{Connor} Conner, P. E.: Transformation groups on a $K(\pi,1)$. {I}. Michigan Math. J. \textbf{6}, 405--412 (1959).

\bibitem[DL]{D--L} Davis, J. F., Livingston, C.: Alexander polynomials of periodic knots. Topology \textbf{30},   551--564 (1991).

\bibitem[DN]{DN} Davis, J. F., Naik, S.: Alexander polynomials of equivariant slice knots. Trans. Amer. Math. Soc. \textbf{358},  2949-2964  (2006). 
%
\bibitem[E]{E} Edmonds, A.: Least area Seifert surfaces and periodic knots. Topology and its Appl. \textbf{18}, 109-113 (1984).
%
\bibitem[H]{LinkFloer}  Hendricks, K.: A note on the link Floer homology of doubly-periodic knots. Preprint, arXiv:1206.5989v1.

\bibitem[HLN]{HLN} Hillman,J., Livingston, C., Naik, S.: The Twisted Alexander Polynomial and Periodicity of Knots. Algebraic and Geometric Topology \textbf{6}, 145--169 (2006). 

\bibitem[J]{Jabuka} Jabuka, S.: Concordance invariants from higher order covers. Topology Appl. \textbf{159}, no. 10--1, 2694--2710 (2012).

\bibitem[JN1]{JN} Jabuka, S., Naik, S.: Order in the concordance group and {H}eegaard {F}loer homology. Geometry \& Topology \textbf{11}, 979--994 (2007).

\bibitem[K]{kawa} Kawauchi, A.: A Survey of Knot Theory, Birkh{\"a}user, 1995.

\bibitem[Moi]{Moise} Moise, E.: Periodic Homeomorphisms of the 3-Sphere. Illinois J. Math. \textbf{6},  206--225 (1962).

\bibitem[Mu1]{M1} Murasugi, K. On Periodic Knots. Comment. Math. Helv. \textbf{46}, 162--174 (1971).

\bibitem[Mu2]{M2} Murasugi, K.: On Symmetries of Knots. Tsukuba J. Math. \textbf{4}, 331--347 (1980).

\bibitem[Mu3]{M3} Murasugi, K.: Jones polynomials of periodic links.  Pacific J. Math. \textbf{131}, 319--329 (1988).

\bibitem[MZ]{MontgomeryZippin} Montgomery, D., Zippin, L.: Examples of transformation groups. Proc. Amer. Math. Soc. \textbf{5}, 460--465 (1954).


\bibitem[N1]{N1} Naik, S.: Periodicity, Genera, and Alexander polynomials of Knots. Pacific J. Math. \textbf{166},  357--371 (1994).

\bibitem[N2]{N3} Naik, S.: New Invariants of Periodic Knots. Math. Proc. Camb. Phil. Soc. \textbf{122},   281--290 (1997). 
  
\bibitem[N3]{waseda} Naik, S.: Equivariant slice knots in $S^3$, {\bf Knots `96 (Tokyo)}, 81--89, Proceedings of the Fifth International Research Institute of Mathematical Society of Japan, held at Waseda University, Tokyo, July 22-26, 1996, ed. S. Suzuki, World Sci. Publishing, 1997.
%
\bibitem[OS1]{peter1} Ozsv\'ath P., Szab\'o, Z.: Holomorphic disks and topological 
invariants for closed three-manifolds. Ann. of Math. \textbf{2}, no. 3, 1027--1158 (2004).
%
\bibitem[OS2]{peter14} Ozsv\'ath P., Szab\'o, Z.: On the Heegaard Floer homology of branched double-covers. Adv. Math. \textbf{194}, 1--33 (2005).

\bibitem[P]{Prz} Przytycki, J.: On Murasugi's and Traczyk's criteria for periodic links. Math. Ann. \textbf{283}, 465--478 (1989).
 
%
\bibitem[Tr1]{T2} Traczyk, P.: $10_{101}$ has no period 7 : A Criterion for Periodicity of Links. Proc. AMS. \textbf{108}, 845--846 (1990).

\bibitem[Tr2]{T3} Traczyk, P.: Periodic knots and the skein polynomial. Inv. Math. \textbf{106}, 73--85 (1991).
%

\bibitem[Y1]{Y1} Yokota, Y.: The Jones polynomial of periodic knots. Proc. AMS \textbf{113}, 889--894 (1991).

\bibitem[Y2]{Y2} Yokota, Y.: Kauffman Polynomial of Periodic Links. Topology \textbf{32}, 309--324 (1993). 

\end{thebibliography}
\end{document}